\def\s{\mathbb{S}}
\def\R{\mathbb{R}}
\def\N{\mathbb{N}}
\def\vol{\mathrm{vol}}
\theoremstyle{remark}
\newtheorem{theorem}{Theorem}
\newtheorem{remark}[theorem]{Remark}
\newtheorem{lemma}{Lemma}
\newtheorem{corollary}{Corollary}
\theoremstyle{definition}
\newtheorem{definition}[theorem]{Definition}
\newtheorem{example}[theorem]{Example}
\begin{document}

\title{Rogers-Shephard Type Inequalities for Sections}
\author[M. Roysdon]{Michael Roysdon}
\address{Department of Mathematical Sciences, Kent State University, Kent,
OH USA} \email{mroysdon@kent.edu}

\begin{abstract}

	In this paper we address the following question: given a measure $\mu$ on $\R^n$, does there exists a constant $C>0$ such that, for any $m$-dimensional subspace $H \subset \R^n$ and any convex body $K \subset \R^n$, the following sectional Rogers-Shephard type inequality holds:
	\[
	\mu((K-K) \cap H) \leq C \sup_{y \in \R^n} \mu(K \cap (H+y))?
	\]
	We show that this inequality is affirmative in the class of measures with radially decreasing densities with the constant $C(n,m) = \binom{n+m}{m}$. We also prove marginal inequalities of the Rogers-Shephard type for $\left(\frac{1}{s}\right)$-concave, $0 \leq s < \infty$, and logarithmically concave functions.

 Keywords:
		Rogers-Shephard type inequalities, functional inequalities, convex bodies, $s$-concave measures
\end{abstract}

\maketitle
\footnote{Research partially supported by Erasmus+ grant for the 2018/2019 academic year}

\section{Introduction and main results}

By $(\R^n, |\cdot|)$ we denote the $n$-dimensional real Euclidean space with its usual metric structure. A {\it convex body} is a compact convex subset of $\R^n$ with non-empty interior. We will say that a convex body $K \subset \R^n$ is {\it symmetric} if, for some $x \in \R^n$, $K-x=-(K-x)$. 
We represent by $B_n$ the
$n$-dimensional Euclidean unit ball, and by $\s^{n-1}$ its boundary. The $n$-dimensional volume of a measurable set
$M\subset\R^n$, i.e., its $n$-dimensional Lebesgue measure, is
denoted by $\vol_n(M)$. Moreover, we denote by $G_{n,m}$ the set of $m$-dimensional linear subspaces of $\R^n$, and given $H \in G_{n,m}$, we shall denote by $H^{\perp}$ the orthogonal complement of $H$. For a set $A \subset \R^n$, let $\chi_A$ denote the characteristic function of $A$. 

The {\it Minkowski addition} of two sets $A,B \subset \R^n$ is defined by their usual vector sum:
\[
A+B =\{a+b \colon a \in A, b \in B\},
\]
and we shall write $A-B$  for $A+(-B)$. 

Connecting the Minkowski addition of convex bodies to their volume is the famed Brunn-Minkowski inequality, one form of which may be stated as follows: given convex bodies $K,L \subset \R^n$, then 
\begin{equation*}
\vol_n(K+L)^{\frac{1}{n}} \geq \vol_n(K)^{\frac{1}{n}} + \vol_n(L)^{\frac{1}{n}}
\end{equation*}
with equality if and only if $K$ and $L$ are homothetic (see \cite{G} for an extensive survey on the Brunn Minkowski inequality). In particular, in the case when $L=-K$, one has $\vol_n(K-K) \geq 2^n \vol_n(K)$ with equality only when $K$ is symmetric. A reverse inequality of this was discovered by Rogers and Shephard in the 1950s, the so-called Rogers-Shephard inequality (see
\cite[Theorem~1]{RS1} and \cite[Section~10.1]{Sch}).
The Rogers-Shephard inequality reads: given any convex body $K \subset \R^n$,
\begin{equation}\label{e:RS}
\vol_n(K-K)\leq \binom{2n}{n}\vol_n(K),
\end{equation}
with equality if and only if $K$ is an $n$-dimensional simplex.

Alternatively, one can view the Minkowski sum in the following way 
\begin{align*}
K+L = \{x \in \R^n \colon K \cap (x-L) \neq \emptyset\}. 
\end{align*}
With this interpretation of the Minkowski sum of sets, given any convex bodies $K$ and $L$, the Brunn-Minkowski inequality implies that the function 
\[
f(x) = \vol_n(K \cap (x-L))^{\frac{1}{n}}
\]
is concave on $K+L$.

In recent years, both the Brunn-Minkowski inequality and the Rogers-Shephard inequalities have been studied deeply and extended to larger classes of measures on $\R^n$.  For results on the Brunn-Minkowski inequality see \cite{Borell,BL,G,GaZv,Leindler,LiMaNaZv,Mar,NaTk,Prekopa,RYN}, and for generalizations of the Rogers-Shephard inequality see \cite{A,AAGJV,ACRYZ,AlGMJV,Co,Sch}.

One of the most famous extensions of the Brunn-Minkowski inequality is the Borell-Brascamp-Lieb inequality (see \cite{Borell,BL,G}), which concerns so-called {\em $\alpha$-concave} measures. A Borel measure $\mu$ defined on $\R^n$ is said to be $\alpha$-concave, for some $\alpha \in [-\infty,\infty]$, if, for all Borel measurable sets $A,B \subset \R^n$ and any $\lambda \in [0,1]$,
\begin{equation}\label{e:alphaconcave}
\mu((1-\lambda)A + \lambda B) \geq M_{\alpha}^{\lambda}(\mu(A), \mu(B)). 
\end{equation}
Analogously, a non-negative Borel measurable function $f$ defined on $\R^n$ is said to be $\alpha$-concave, for some $\alpha \in [-\infty, \infty],$ if
\[
f((1-\lambda)x + \lambda y) \geq M_{\alpha}^{\lambda}(f(x),f(y))
\]
for all $x,y \in \R^n$ and $\lambda \in [0,1]$. Here $M^{\lambda}_{\alpha}$ denotes the
{\em $\alpha$-mean} of two non-negative numbers:
\[
M_{\alpha}^{\lambda}(a,b)=\left\{
\begin{array}{ll}
\bigl((1-\lambda)a^{\alpha}+\lambda b^{\alpha}\bigr)^{\frac{1}{\alpha}}, & \text{ if }\alpha\neq 0,\pm\infty,\\[1mm]
a^{1-\lambda}b^{\lambda}, & \text{ if }\alpha=0,\\[1mm]
\max\{a,b\}, & \text{ if }\alpha=\infty,\\[1mm]
\min\{a,b\}, & \text{ if }\alpha=-\infty;
\end{array}\right.
\]
for $ab>0$;  $M_{\alpha}^{\lambda}(a,b)=0$ when $ab=0$. A $0$-concave function is usually called
\emph{log-concave} whereas a $(-\infty)$-concave function is called
\emph{quasi-concave}. Equivalently, a non-negative function $f$ defined on $\R^n$ is quasi-concave if each of its super-level sets 
$$C_t(f) = \{x \in \R^n \colon f(x) \geq t \|f\|_{\infty}\}$$
are convex sets for all $0 \leq t \leq 1 $. Here
$$
\|f\|_{\infty} = \inf \left\{ t \in \R \colon \vol_n(\{x \in \R^n \colon f(x) > t \}) = 0 \right\}
$$
denotes the {\em essential supremum} of $f$. One form of the  Borell-Brascamp-Lieb inequality is stated as follows (see \cite[Theorem~10.2]{G} or \cite[Proposition~1.4.4]{AGM}). 

\begin{theorem}[Borell-Brascamp-Lieb inequality] Let $\lambda \in [0,1]$ and $-\frac{1}{n} \leq \alpha \leq \infty$. Given non-negative measurable functions $f,g,$ and $h$ defined on $\R^n$ satisfying 
	\[
	h((1-\lambda)x) + \lambda y) \geq M_{\alpha}^{\lambda}(f(x),g(y))
	\]
	for all $x,y \in \R^n$, then 
	\begin{equation}\label{e:BBL}
	\int_{\R^n} h(x) dx \geq M_{\frac{\alpha}{n\alpha+1}}^{\lambda} \left(\int_{\R^n} f(x) dx, \int_{\R^n} g(x)dx \right).
	\end{equation}
\end{theorem}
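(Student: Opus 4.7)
The plan is to establish \eqref{e:BBL} by induction on the dimension $n$, with the base case $n=1$ handled via an explicit quantile parametrization and the inductive step carried out by a Fubini slicing argument. Note that this induction is self-consistent because $-1/n \geq -1/(n-1)$, so the $(n-1)$-dimensional statement at the same $\alpha$ is automatically available whenever $\alpha \geq -1/n$.

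\textbf{Base case $n=1$.} If $u := \int_\R f$ or $v := \int_\R g$ vanishes, the conclusion is trivial by the convention $M_\alpha^\lambda(a,b) = 0$ when $ab = 0$. Otherwise, define the non-decreasing quantile maps $x, y : (0,1) \to \R$ by $\int_{-\infty}^{x(t)} f = tu$ and $\int_{-\infty}^{y(t)} g = tv$. These are absolutely continuous with $x'(t) = u/f(x(t))$ and $y'(t) = v/g(y(t))$ almost everywhere. Setting $z(t) = (1-\lambda)x(t) + \lambda y(t)$, which is also non-decreasing, the change of variables together with the hypothesis yields
\[
\int_\R h \;\geq\; \int_0^1 h(z(t))\, z'(t)\, dt \;\geq\; \int_0^1 M_\alpha^\lambda\bigl(f(x(t)),\, g(y(t))\bigr) \Bigl[(1-\lambda)\tfrac{u}{f(x(t))} + \lambda \tfrac{v}{g(y(t))}\Bigr] dt.
\]
The proof then reduces to the pointwise numerical inequality
\[
M_\alpha^\lambda(a,b) \cdot \Bigl[(1-\lambda)\tfrac{u}{a} + \lambda \tfrac{v}{b}\Bigr] \;\geq\; M_{\alpha/(\alpha+1)}^\lambda(u,v), \qquad a,b,u,v > 0,\ \alpha \geq -1,
\]
which is verified by separating the regimes $\alpha > 0$, $\alpha = 0$, and $-1 \leq \alpha < 0$, each reducing to H\"older's inequality or the weighted AM--GM inequality.

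\textbf{Inductive step.} Write $\R^n = \R \times \R^{n-1}$, and points as $(s,\xi)$. For any $s_1, s_2 \in \R$, set $s = (1-\lambda)s_1 + \lambda s_2$. Restricted to slices, the hypothesis reads $h(s,(1-\lambda)\xi_1 + \lambda\xi_2) \geq M_\alpha^\lambda(f(s_1,\xi_1), g(s_2,\xi_2))$ for all $\xi_1, \xi_2 \in \R^{n-1}$. Applying the $(n-1)$-dimensional hypothesis to the slices, the one-dimensional marginals $F(s) := \int_{\R^{n-1}} f(s,\xi)\, d\xi$, $G(s) := \int_{\R^{n-1}} g(s,\xi)\, d\xi$, $H(s) := \int_{\R^{n-1}} h(s,\xi)\, d\xi$ satisfy
\[
H(s) \;\geq\; M_{\alpha/((n-1)\alpha+1)}^\lambda\bigl(F(s_1), G(s_2)\bigr).
\]
The $n=1$ base case applied to $F,G,H$ with exponent $\alpha' = \alpha/((n-1)\alpha+1)$ (valid since $\alpha \geq -1/n$ gives $\alpha' \geq -1$), combined with Fubini and the algebraic identity $\alpha'/(\alpha'+1) = \alpha/(n\alpha+1)$, closes the induction.

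\textbf{Main obstacle.} The principal difficulty lies in the base case. Technically, the quantile parametrization requires justification for merely measurable $f,g$; this is handled by a standard approximation from below by upper semicontinuous functions, or by first regularizing and passing to the limit via monotone convergence. Conceptually, the numerical mean inequality must be valid uniformly across $\alpha \in [-1,\infty]$, with the endpoints $\alpha = -1$ (producing the quasi-concave conclusion $\beta = -\infty$) and $\alpha = \infty$ (the essential-sup hypothesis) requiring separate but straightforward interpretation via the limiting definitions of $M_\alpha^\lambda$.
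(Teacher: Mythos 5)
The paper does not prove this theorem; it is quoted as background and attributed to the literature (Gardner's Brunn--Minkowski survey, Theorem~10.2, and Artstein-Avidan--Giannopoulos--Milman, Proposition~1.4.4). There is therefore no author's proof to compare against. Your proposal is the canonical Pr\'ekopa-style argument used in those references: the one-dimensional case via non-decreasing quantile parametrizations together with the scalar lemma $M_\alpha^\lambda(a,b)\,M_1^\lambda(c,d)\geq M_{\alpha/(\alpha+1)}^\lambda(ac,bd)$ for $\alpha\geq -1$ (Gardner's Lemma~10.1), followed by Fubini slicing and the exponent identity $\alpha'/(\alpha'+1)=\alpha/(n\alpha+1)$ with $\alpha'=\alpha/((n-1)\alpha+1)$ to descend the dimension. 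The induction bookkeeping is correct: $\alpha\geq -1/n$ does imply $\alpha\geq -1/(n-1)$ and $\alpha'\geq -1$, with equality $\alpha'=-1$ exactly at $\alpha=-1/n$, which is consistent with the endpoint of the scalar lemma. The measure-theoretic care needed to justify $z'(t)=(1-\lambda)x'(t)+\lambda y'(t)$ and the change of variables for merely measurable $f,g$ is real, as you flag, and is handled in the literature by regularization and monotone convergence. In short, your argument is correct and is essentially the proof the paper points to by citation.
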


Recently, inequality (\ref{e:RS}) was extended, after a suitable change accounting for the lack of translation invariance of general measures, to the setting of measures having radially decreasing densities (see \cite[Theorem~1.1]{ACRYZ}).  We say a function $\phi \colon \R^n \to \mathbb{R}_+$ is {\it radially decreasing} if, for each $t \in [0,1]$ and any $x \in \R^n$, one has 	$\phi(tx) \geq \phi(x)$.  Note that a quasi-concave function $f$ that assumes it maximum at the origin is radially decreasing. 

Following \cite{Co} we consider a functional analogue of the difference body. Given $\alpha$-concave function $f,g \colon \R^n \to \mathbb{R}_+$, for some $\alpha \in [-\infty,\infty]$, we define
\begin{equation*}
\Delta_{\alpha}^{f,g}(x) = \sup_{x=x_1-x_2} M_{\alpha}(f(x_1),g(x_2)),
\end{equation*}
where, for $a,b \geq 0$ with $ab>0$, 
\[
M_{\alpha}(a,b)=\left\{
\begin{array}{ll}
\bigl(a^{\alpha}+b^{\alpha}\bigr)^{\frac{1}{\alpha}}, & \text{ if }\alpha\neq 0,\pm\infty,\\[1mm]
ab, & \text{ if }\alpha=0,\\[1mm]
\max\{a,b\}, & \text{ if }\alpha=\infty,\\[1mm]
\min\{a,b\}, & \text{ if }\alpha=-\infty;
\end{array}\right.
\]
and  $M_{\alpha}(a,b)=0$ when $ab=0$.
The function $\Delta_{\alpha}^{f,g} $ is called the $\alpha$-{\it difference function}. This function is even and $\alpha$-concave for any $\alpha$. For more details on such functions, please see \cite{BC,Co,KL,Sch}. In particular, when $g=f$, we denote the $\alpha$-difference function by $\Delta_{\alpha} f$. In \cite{Co}, Colesanti established the following functional version of inequality (\ref{e:RS}),  in the case when $\alpha = \frac{1}{s}$ for some $s \in (-\infty,0]$:
\begin{equation}\label{e:Colesanti1}
\int_{\R^n} \Delta_{\frac{1}{s}} f(x) dx \leq \binom{2n}{n} \int_{\R^n} f(x) dx,
\end{equation}
where $f \colon \R^n \to \mathbb{R}_+$ is an integrable $\left(\frac{1}{s}\right)$-concave function.  Moreover, if one takes $f = \chi_K$, the characteristic function of $K$, then (\ref{e:RS}) is recovered.

Given a Borel measure $\mu$ on $\R^n$ with density $\phi$ and $H \in G_{n,m}$, we define the {\em marginal} of $\mu$ with respect to the subspace $H$ by 
\[
\mu(A \cap H) = \int_H \phi(x) \chi_A(x)dx
\]
for all compact subsets $A$ of $\R^n$. 

Let $\mu$ be a symmetric $\alpha$-concave measure on $\R^n$. If $K \subset \R^n$ is taken to be an origin-symmetric convex body and $H \in G_{n,m}$, then inequality \eqref{e:alphaconcave} together with the convexity of $K$ implies that the function
\[
x \in H^{\perp} \mapsto \mu(K \cap (x+H))^{\alpha}
\]
is an even concave function on its support. In particular, its maximum occurs at the origin; that is, 
\[
\max_{x \in H^{\perp}} \mu(K \cap(x+H))^{\alpha} = \mu(K \cap H)^{\alpha}
\]
holds whenever $\mu$ is a symmetric, $\alpha$-concave measure and $K \subset \R^n$ is an origin symmetric convex body. In the case when $\mu = \vol_m$, the previous inequality states that, for any origin-symmetric convex body in $\R^n$, the section of largest $m$-dimensional volume is the central section.

In \cite[Theorem~1]{RU1} Rudelson found an asymptotic inequality that measures the section of largest $m$-dimensional volume of a (not necessarily  symmetric) convex body of the same type as \eqref{e:RS}; it bounds the volume of the central section of the difference body of a convex body by an $m$-dimensional subspace from above by a constant multiple, depending both on the dimension and sub-dimension, of the maximal parallel section of the original body. This result reads as follows.

\begin{theorem}[Rudelson]\label{t:Rudelson_original} Given a convex body $K$ and $H \in G_{n,m}$, one has 
	\begin{equation}\label{e:RU}
	\vol_m((K-K) \cap H) \leq \left[c \psi(n,m) \right]^m \sup_{x \in \R^n} \vol_m(K \cap (H+x)), 
	\end{equation}
	where $c>1$ is some absolute constant and
	\[
	\psi(n,m) =\min \left\{ \frac{n}{m}, \sqrt{m} \right\}.
	\]
\end{theorem}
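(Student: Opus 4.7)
The bound $\psi(n,m) = \min\{n/m,\sqrt m\}$ splits the theorem into two regimes: the bound $(cn/m)^m$ is binding when $m \lesssim n^{2/3}$, and $(c\sqrt m)^m$ thereafter. The plan is to prove the two bounds by entirely different methods and then take the minimum. The $n/m$ bound will admit a short Rogers-Shephard style argument, while the $\sqrt m$ bound is the hard part.

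For the $n/m$ bound, I would introduce the auto-correlation $g(h) := \vol_n(K \cap (K+h))$. By the Brunn-Minkowski concavity already mentioned in the excerpt (applied to the pair $(K,-K)$), $g^{1/n}$ is concave on $\R^n$, supported on $K-K$, and attains its maximum $\vol_n(K)^{1/n}$ at the origin. Let $M := \sup_{y \in \R^n} \vol_m(K \cap (H+y))$. The key step is to sandwich the integral $\int_{(K-K) \cap H} g(h)\,dh$. An upper bound follows by Fubini: writing $g(h) = \int \chi_K(x)\chi_K(x-h)\,dx$ and integrating $h \in H$ first converts the integral into an $L^2$ integral over $H^\perp$,
\[
\int_H g(h)\,dh \;=\; \int_{H^\perp} \vol_m(K \cap (H+z))^2\,dz \;\leq\; M \cdot \vol_n(K).
\]
A lower bound follows from concavity and the fact that $(K-K) \cap H$ is symmetric in $H$: one has $g(h) \geq \vol_n(K)(1-\|h\|_{(K-K)\cap H})^n$ on $(K-K)\cap H$, and polar coordinates against the gauge of $(K-K) \cap H$ together with the Beta identity $m\int_0^1 (1-r)^n r^{m-1}\,dr = \binom{n+m}{m}^{-1}$ give
\[
\int_{(K-K)\cap H} g(h)\,dh \;\geq\; \binom{n+m}{m}^{-1}\vol_n(K) \cdot \vol_m((K-K)\cap H).
\]
Chaining the two estimates yields $\vol_m((K-K)\cap H) \leq \binom{n+m}{m} M$, and Stirling's bound $\binom{n+m}{m} \leq (2en/m)^m$ (valid for $m \leq n$) recovers the first branch of $\psi$.

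For the $\sqrt m$ branch I would first reduce to the symmetrized body $L := (K-K)/2$, so that $(K-K)\cap H = 2(L\cap H)$, and then place $L$ in Milman's M-position, in which $\vol_n(L)^{1/n}$ is comparable to the volume radius of an ellipsoid $\mathcal E$ up to factors that are absolute constants. For $\mathcal E$ the section estimate for $\mathcal E \cap H$ versus its maximal parallel section reduces to a spectral/Gaussian computation whose worst case is exactly the $\sqrt m$ factor (the typical magnitude of an $m$-dimensional Gaussian projection of a unit vector). The final step is to transfer this bound from $\mathcal E \cap H$ to $L \cap H$ via M-position covering numbers, and from the maximal parallel section of $L$ back to that of $K$ via classical Rogers-Shephard projection-section inequalities.

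The $n/m$ part is essentially a two-step integral computation once the Fubini identity and the polar-coordinate integration are in place; I expect no serious difficulty there. The principal obstacle is the $\sqrt m$ branch: preserving $O(1)$ losses through the M-position transfer \emph{uniformly} over all $H \in G_{n,m}$, and converting a symmetric-section bound on $L$ back into a maximal-parallel-section bound on the asymmetric body $K$ without paying an exponential-in-$n$ price.
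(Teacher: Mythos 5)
The paper does not actually prove Theorem~\ref{t:Rudelson_original}; it states it as a quoted result from Rudelson \cite{RU1} and uses it as a black box. What the paper proves itself (Theorem~\ref{t:SRRS} and the corollary following it) recovers only the $n/m$ branch of $\psi$, generalized to radially decreasing measures. Your argument for that branch is correct and closely parallels the paper's: your autocorrelation $g(h)=\vol_n(K\cap(K+h))$ is, up to the sign of the argument, the same function as the paper's $f$ in \eqref{e:zero} with $p=1$; your Fubini identity $\int_H g = \int_{H^\perp}\vol_m(K\cap(H+z))^2\,dz \le M\vol_n(K)$ matches the paper's step \eqref{e:three}; and your lower bound via $(1/n)$-concavity, the gauge of $(K-K)\cap H$, and the Beta integral $m\int_0^1(1-r)^n r^{m-1}dr=\binom{n+m}{m}^{-1}$ is exactly the paper's polar-coordinate estimate \eqref{e:four}--\eqref{e:five}. (Small slip: $(cn/m)^m$ is the binding branch when $m\gtrsim n^{2/3}$, not $m\lesssim n^{2/3}$; this does not affect your proof since you establish both bounds unconditionally.)

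The $\sqrt m$ branch, however, is an outline rather than a proof, and the outline as written has concrete gaps. First, the ellipsoid model cannot be where the $\sqrt m$ comes from: for any centrally symmetric body, in particular an ellipsoid $\mathcal E$, the function $z\mapsto\vol_m(\mathcal E\cap(H+z))^{1/m}$ is even and concave on $H^\perp$, so the central section \emph{is} the maximal parallel section and the ratio you propose to compute is $1$, not $\sqrt m$. Second, M-position controls covering numbers and global volume, but covering-number comparisons between $L$ and $\mathcal E$ do not transfer to a volume comparison of the fixed $m$-dimensional slices $L\cap H$ and $\mathcal E\cap H$ uniformly over $H\in G_{n,m}$ without an exponential-in-codimension loss; this transfer is not a routine step. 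Third, the last reduction---passing from $\sup_y\vol_m(L\cap(H+y))$ with $L=(K-K)/2$ back to $\sup_y\vol_m(K\cap(H+y))$ at a cost of only $(c\sqrt m)^m$---is essentially the content of the theorem itself; the classical Rogers--Shephard projection-section inequality $\vol_m(K\cap H)\vol_{n-m}(P_{H^\perp}K)\le\binom{n}{m}\vol_n(K)$ does not produce it. You flag these obstacles yourself, but flagging them does not close them; as it stands the $\sqrt m$ branch would have to be imported wholesale from \cite{RU1}, which is precisely what the paper does.
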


Inequality (\ref{e:RU}) was an important tool in estimating the Banach-Mazur distance between non-symmetric convex bodies and estimating the so-called $MM^*$-estimate for non-symmetric convex bodies (see \cite{AGM,RU2} for more details).

Applying inequality (\ref{e:RU}) to the identity (which follows form Fubini's theorem) 
\[
\frac{\int_{H} \Delta_{-\infty} f(x) dx}{\|f\|_{\infty} } = \int_0^1 \vol_m((C_t(f) - C_t(f)) \cap H)dt,
\]
and using the fact that, for all $a,b >0$, we have that $\left(a^{\frac{1}{s}}+b^{\frac{1}{s}}\right)^s \leq \min\{a,b\}$ whenever $s \in (-\infty, 0)$, $\Delta_{\frac{1}{s}}f \leq \Delta_{-\infty}f $ for all $s \in (-\infty,0)$, we extend inequality (\ref{e:Colesanti1}) to marginals of integrable quasi-concave functions. 

\begin{corollary}\label{t:RUQCFunctional} Given any integrable, bounded $\left(\frac{1}{s}\right)$-concave function $f \colon \R^n \to \mathbb{R}_+$ with $s \in (-\infty,0]$, 
	\begin{equation}\label{e:RU_F_Lebesgue}
	\frac{\int_{H} \Delta_{\frac{1}{s}} f(x) dx}{\|f\|_{\infty} }\leq  \left[c \psi(n,m) \right]^m \int_0^1 \sup_{y \in \R^n} \vol_m(C_t(f) \cap (H+y)) dt
	\end{equation}
	for some constant $c>0$.
\end{corollary}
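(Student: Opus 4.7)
The plan is to combine three ingredients: a pointwise comparison of the difference functions $\Delta_{1/s} f$ and $\Delta_{-\infty} f$, the layer-cake identity stated just before the corollary, and a slice-wise application of Theorem~\ref{t:Rudelson_original}.

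First I would show that $\Delta_{1/s} f \leq \Delta_{-\infty} f$ pointwise on $\R^n$ for $s \in (-\infty, 0]$. Writing $\alpha = 1/s \leq 0$ and taking any $a,b \geq 0$ with $a \leq b$, the non-normalized mean satisfies
\[
M_\alpha(a,b) = (a^\alpha + b^\alpha)^{1/\alpha} \leq (a^\alpha)^{1/\alpha} = a = \min(a,b) = M_{-\infty}(a,b),
\]
since $a^\alpha \geq b^\alpha > 0$ when $\alpha < 0$, and raising to the negative power $1/\alpha$ reverses the direction. Passing to the supremum over representations $x = x_1 - x_2$ then yields $\Delta_{1/s}f(x) \leq \Delta_{-\infty}f(x)$ for all $x$, which, after integration over $H$ and division by $\|f\|_\infty$, gives
\[
\frac{1}{\|f\|_\infty}\int_H \Delta_{1/s} f(x)\, dx \leq \frac{1}{\|f\|_\infty}\int_H \Delta_{-\infty} f(x)\, dx.
\]
(The boundary case $s=0$, interpreted as $\alpha = -\infty$, is trivial.)

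Second, I would invoke the identity preceding the corollary to rewrite the right-hand side above as $\int_0^1 \vol_m((C_t(f) - C_t(f)) \cap H)\, dt$. Because $f$ is bounded, integrable, and quasi-concave (every $(1/s)$-concave function is so), the super-level set $C_t(f)$ is a convex body in $\R^n$ for almost every $t \in (0,1)$. For each such $t$, Theorem~\ref{t:Rudelson_original} applied to $K = C_t(f)$ yields
\[
\vol_m((C_t(f) - C_t(f)) \cap H) \leq [c\, \psi(n,m)]^m \sup_{y \in \R^n} \vol_m(C_t(f) \cap (H+y)).
\]
Integrating over $t \in (0,1)$ and chaining with the two preceding displays produces the corollary.

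The only delicate point is the slice-wise application of Rudelson's theorem: one has to address the exceptional set of $t$'s at which $C_t(f)$ is convex but lacks non-empty interior, so that Theorem~\ref{t:Rudelson_original} is not formally applicable. Since this set has Lebesgue measure zero in $(0,1)$, it contributes nothing to either integral, and thus this does not constitute a genuine obstacle.
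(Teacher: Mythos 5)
Your argument is correct and matches the paper's own route: the reduction $\Delta_{1/s}f \le \Delta_{-\infty}f$ via $M_{1/s}(a,b)\le\min\{a,b\}$ for $s<0$ (made explicit in the proof of Theorem~\ref{t:RU_measures_rad_decreasing}), followed by the Fubini layer-cake identity and a slicewise application of Theorem~\ref{t:Rudelson_original}. Your observation that the exceptional set of $t$ is null is also sound, since $\|f\|_\infty$ is the essential supremum and hence $\vol_n(C_t(f))>0$ for every $t<1$.
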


One may wish to strengthen the inequality appearing in (\ref{e:RU_F_Lebesgue}), in the sense of commuting the integral with the supremum. We address this issue in the case of logarithmically concave functions (cf. Theorem~\ref{t:KMRlogconcave}).

Fix any $p \in \N$. Given a convex body $K$, we consider the  $np$-dimensional convex body given by 
\[
D_p(K)= \left\{\bar{x} = (x_1,\dots,x_p) \in (\R^n)^p \colon K \cap \left( \bigcap_{i=1}^p(x_i+K) \right) \neq \emptyset \right\}.
\]
Note that $D_1(K) = K-K$ is the usual difference body of $K$. 
These bodies were originally introduced by Schneider in \cite{Sch2}, where the convexity of the body $D_p(K)$ was established as well as the following Rogers-Shephard type inequality for $D_p(K)$: given a convex body $K \subset \R^n$, 
\begin{equation}\label{e:SRS}
\vol_{np}(D_p(K)) \leq \binom{np+n}{n} \vol_n(K)^p
\end{equation}
with equality if and only if $K$ is a simplex. 

The following theorem is the main result of this paper, which generalizes inequality \eqref{e:SRS}, and by extension Theorem~\ref{t:Rudelson_original} when $\psi(n,m) = \frac{n}{m}$, to the setting of measures with radially decreasing densities.

\begin{theorem}\label{t:SRRS} Fix $p \in \N$. Let $\eta$ be a measure on $\R^n$ given by $d\eta(x) = \psi(x)dx$, where $\psi \colon \R^n \to \R_+$ is $\left(\frac{1}{s}\right)$-concave, for some $s \in (0,\infty)$, and such that $\psi(0) = \|\psi\|_{\infty}$. For each $i=1,\dots,p$ let $\mu_i$ be measure on $\R^n$ with density $\phi_i \colon \R^n \to \R_+$ that is radially decreasing. Let $\nu = \prod_{i=1}^p \mu_i$ be the associated product measure on $(\R^n)^p$ having density $\phi$. For each $i=1,\dots,p$ let $H_i \in G_{n,m_i}$ $H_i  \in G_{n,m_i}$ be an $m_i$-dimensional subspace of the $i$th copy of $\R^n$, and set $\bar{H} = H_1 \times \cdots \times H_p$ be the associated product subspace of $(\R^n)^p$. Then, for any convex body $K \subset \R^n$ such that $\eta(K) >0$,
	\begin{equation}\label{e:SRRS}
	\begin{split}
	\nu\left(D_p(K) \cap \bar{H}\right) 
	\leq  \frac{c(n,m,s)}{\eta(K)} \int_{K} \prod_{i=1}^p \mu_i[(y-K) \cap H_i] d\eta(y),
	\end{split}
	\end{equation}
	where $m= m_1+\cdots+m_p$ and
	\[
	c(n,m,s) = \binom{n+m+s}{m+s}.
	\]
\end{theorem}

Here the combinatorial number $\binom{n+m+s}{m+s}$ for non-integer values of $s$ is defined in terms of the Beta function,  $B(x,y) = \int_0^1 (1-t)^{x-1}t^{y-1} dt$.. 
By choosing $p=1$, letting $m = 1,\dots, n$ be arbitrary,  and replacing $K$ with $-K$ in inequality \eqref{e:SRRS}, after an application of Stirling's formula we have the following extension of Theorem~\ref{t:Rudelson_original}.

\begin{corollary}
Let $\mu$ be any measure on $\R^n$ given by $\mu(x) = \phi(x) dx$, where $\phi \colon \R^n \to \R_+$ is radially decreasing and satisfies and let $H \in G_{n,m}$. Then, for any convex body $K \subset \R^n$ and any $s \in (0,\infty)$,
\begin{equation}\label{e:RudelsonGeneralized}
\begin{split}
\mu((K-K)\cap H) &\leq \binom{n+m+s}{m+s} \sup_{y \in \R^n} \mu(K \cap (H+y))\\
&\leq \left[C \cdot \frac{n+s}{m+s}\right]^{m+s} \sup_{y \in \R^n} \mu(K \cap (H+y)).
\end{split}
\end{equation}
\end{corollary}

\begin{remark}
Notice that, in the setting of Theorem \ref{t:SRRS}, as we will see in its proof, if we replace $d\eta(x)$ with the Lebesgue measure, inequality \eqref{e:SRRS} becomes 
	\begin{equation}\label{e:SRRSV}
	\nu\left(D_p(K) \cap \bar{H}\right) \leq  \frac{\binom{n+m}{m}}{\vol_n(K)} \int_{K} \prod_{i=1}^p \mu_i[(y-K) \cap H_i] dy,
	\end{equation}
\end{remark}

\begin{theorem}\label{t:RudelsonGeneralized}
	Let $\mu$ be a measure on $\R^n$ given by $d\mu(x) = \phi(x) dx$, where $\phi \colon \R^n \to \R_+$ is radially decreasing and let $H \in G_{n,m}$. Then, for any convex body $K \subset \R^n$,
	\begin{equation}\label{e:RU_good}
	\begin{split}
	\mu((K-K) \cap H) &\leq \binom{n+m}{m} \sup_{y \in \R^n}\mu(K \cap (H+y))\\
	&\leq \left[\frac{cn}{m}\right]^m \sup_{y \in \R^n} \mu( K \cap (H+y))
	\end{split}
	\end{equation}
	for some absolute constant $c>1$.
\end{theorem}

We would like to remark that, in the case when $\mu$ is taken to be a measure having an even quasi-concave density $\phi \colon \R^n \to \R_+$, then one can reverse ienquality \eqref{e:RU_good}. Indeed, noting that $\phi$ is a bounded, even, and quasi-concave functions, the super-level sets $C_t(\phi)$, are origin-symmetric convex bodies for all $0 \leq t < 1$.  For each $y \in \R^n$ set $K_t(y) = (K-y) \cap C_t(\phi)$. Consequently, using Fubini's theorem, together with the Brunn-Minkowki inequality, one may write, for each $y \in \R^n$, that
\begin{align*}
\mu(K \cap (H+y)) &= \mu((K-y) \cap H)\\
&= \|\phi\|_{\infty} \int_0^1 \vol_m(K_t(y) \cap H) dt\\
&\leq  \|\phi\|_{\infty} 2^{-m} \int_0^1 \vol_m((K_{t}(y)-K_t(y)) \cap H) dt\\
&\leq \mu\left(\left(\frac{K-K}{2}\right) \cap H \right),
\end{align*}
where, in the case inequality, we have used inclusion (which follows from the symmetry and convexity of $C_t(\phi)$ and the fact that $H$ is a subspace), 
\[
(K_t(y) - K_t(y)) \cap H \subset [(K-K) \cap 2C_t(\phi)] \cap H = 2 \left( \frac{K-K}{2} \cap C_t(\phi)\right) \cap H.
\]
This computation leads to the following result which measures the symmetry of sections of any convex body in terms of the central section of its associated difference body. 

\begin{theorem}
Suppose that $\mu$ is a measure on $\R^n$ with bounded, even quasi-concave density $\phi \colon \R^n \to \R_+$, and let $H \in G_{n.m}$. Then, for any convex body $K \subset \R^n$, one has 
\[
1 \leq \left[\frac{\mu((K-K) \cap H}{\sup_{y \in \R^n} \mu(K \cap (H+y))} \right]^{\frac{1}{m}}\leq \binom{n+m}{m}^{\frac{1}{m}} \leq C \cdot \frac{n}{m},
\]
where $C>1$ is some absolute constant.
\end{theorem}

For example, the above inequality implies that, for any convex body $K \subset \R^n$ and any $H \in G_{n,m}$ one has the following estimate for sections of $K$ with respect to the standard Guassian measure, $\gamma_n$, on $\R^n$:
\[
1 \leq \left[ \frac{\gamma_n((K-K) \cap H)}{\sup_{y\in \R^n} \gamma_n(K \cap (y+K)} \right]^{1/m} \leq c\frac{n}{m}
\]
for some absolute constant $c > 1$. Recall, that the standard Gaussian measure on $\R^n$ is the measure whose density is given by 
\[
g(x) = \frac{e^{-\frac{|x|^2}{2}}}{(2\pi)^{n/2}}.
\]

The proof of Theorem~\ref{t:SRRS} relies on the following theorem, which has its own independent interest, and implies additional inequalities of the Rogers-Shephard type, as we will see below.  This inequality is an extension of a theorem due to Chakerian (see \cite[Theorem~1]{Ch}).

\begin{theorem}\label{t:ChQuasiConcave} Let $\mu$ be a measure on $\R^n$ having a radially decreasing density, $H \in G_{n,m}$, and $h \colon \R^n \to \R_+$ be a strictly increasing differentiable function. Suppose that $g \colon \R^n \to \R_+$ is an integrable quasi-concave function, assuming its maximum at the origin, and suppose that $f \colon \R^n \to \R_+$ is a not identically zero concave function.  Then 
\begin{equation}\label{e:chf}
\int_H h(f(x))g(x) d\mu(x) \geq \beta \cdot \int_H g(x) d\mu(x).
\end{equation}
where 
\[
\beta = f(0) \int_0^1 h(tf(0))(1-t)^mdt.
\]
\end{theorem}

The organization of the paper is as follows. In the first part of Section 2, we prove both Theorem~\ref{t:SRRS} and Theorem~\ref{t:ChQuasiConcave}; in the second part of this section, we discuss addition sectional inequalities of the Rogers-Shephard type. As a consequence of the main theorem, in Section 3 marginal inequalities of the Rogers-Shephard type, first for $\left(\frac{1}{s}\right)$-concave functions, with $s \in \mathbb{Q}$, with $s \geq n$, and then, using a different method, we prove a variation of Corollary~\ref{t:RUQCFunctional} for logarithmically concave functions.

\section{Sectional Rogers-Shephard type inequalities}

This section is dedicated to the proof of Theorem~\ref{t:SRRS} and some additional inequalities of the Rogers-Shephard type. 

\subsection{Proof of the main theorem}

The proof of Theorem~\ref{t:SRRS} relies on the following version of Theorem~\ref{t:ChQuasiConcave}.

\begin{theorem}\label{t:chak}
Let $\mu$ be a measure on $\R^n$ having radially decreasing density $\phi \colon \R^n \to \R_+$. Let $f \colon K \to \R_+$ be a not identically zero concave function supported on a convex body $K \subset \R^n$ having the origin as an interior point. Let $h \colon \R_+ \to \R_+$ be a differentiable strictly increasing function.  Then, for any $H \in G_{n,m}$,
\begin{equation}\label{e:ch}
\int_{K \cap H} h(f(x)) d\mu(x) \geq \beta \cdot \mu(K \cap H),
\end{equation}
where 
\[
\beta = f(0) \int_0^1 h'(tf(0))(1-t)^m dt.
\]
\end{theorem}

\begin{proof} Integrating in polar coordinates, we may write 
\[
\int_{K \cap H} h(f(x)) d\mu(x) = \int_{\s^{n-1} \cap H}\int_0^{\rho_K(u)} h(f(ru)) \phi(ru) r^{m-1} dr du. 
\]
Consider the function $g\colon K \cap H \to \R_+$ given by \[
g(x) = f(0)\left(1 - \frac{|x|}{\rho_K\left(\frac{x}{|x|}\right)}\right) \quad \text{if } x \neq 0,
\]
and $g(0) = f(0)$.  The concavity of $f$, together with the fact that $f(0)=g(0)$, the monotonicity of the integral and fact that $h$ is strictly increasing implies that
\begin{equation}\label{e:ch1}
\begin{split}
\int_{K \cap H} h(f(x)) d\mu(x) & \geq \int_{\s^{n-1} \cap H} \int_0^{\rho_K(u)} h(g(ru))\phi(ru)r^{m-1} dr du.\\
\end{split}
\end{equation}

Fix some direction $u \in \s^{n-1} \cap H$ and consider the function $\psi \colon (0,\rho_K(u)] \to \R_+$ given by 
\[
\psi(y) = \beta \int_0^y \phi(r) r^{m-1} dr - \int_0^y h\left(f(0)\left(1 - \frac{r}{y}\right)\right)\phi(r) r^{m-1} dr,
\]
where $\beta > 0$ is a constant to be chosen such that $\psi(y) \leq 0$. Using the fact that $\phi$ is bounded together with the fact that $h$ is integrable on each segment $(0,y] \subset (0,\infty)$ (indeed, since $h$ is strictly increasing and differentiable, it is a continuous function that is bounded on each segment by an integrable function), we may assert that $\psi(y) \to 0$ as $y \to 0^+$. Since $\psi$ is absolutely continuous on each $[a,b] \subset (0,y]$, $\psi$ may be represented by 
\[
\psi(y) = \psi(a) + \int_0^y \psi'(s) ds.
\]
Consequently, to have $\beta >0$ to be such that $\psi(y) \leq 0$, it suffices for $\beta$ to be selected so that $\psi'(y) \leq 0$ for almost every $y \in (0,\rho_K(u)]$. Differentiation of $\psi$ yields the representation
\[
\psi'(y) = \beta \phi(y) y^{m-1} -f(0) \int_0^y h'\left(f(0)\left(1 - \frac{r}{y}\right)\right)\frac{r^m}{y^2} \phi(r) dr. 
\]
Since $\phi$ is radially decreasing, it suffices to select $\beta$ to satisfy the condition 
\[
\beta \leq f(0)\int_0^y h'\left(f(0)\left(1 - \frac{r}{y}\right)\right)\frac{r^m}{y^{m+1}} dr,
\]
or equivalently, applying the change of variables $u=r/y$ followed by the change of variables $t= 1-u$, we see that it suffices for $\beta$ to satisfy
\[
\beta \leq f(0) \int_0^1 h'(f(0)t) (1-t)^m dt.
\]
Choosing $\beta=f(0) \int_0^1 h'(f(0)t) (1-t)^m dt$, \eqref{e:ch1} implies that 
\begin{align*}
\int_{K \cap H} h(f(x)) d\mu(x) &\geq  \beta \int_{\s^{n-1}} \int_0^{\rho_K(x)} \phi(ru) r^{m-1} dr du\\
&= \beta \cdot \mu(K \cap H),
\end{align*}
as desired.
\end{proof}

We would like to remark that the full statement of Theorem~\ref{t:ChQuasiConcave}, follows from the above proof by noting that $C_s(g)$ (where $g$ is as in the statement of the theorem) are convex bodies containing the origin as an interior point for all $s \in (0,1)$, and an application of Fubini's theorem at the right moments. In this way inequality \eqref{e:ch} implies inequality \eqref{e:chf}.

We are now ready to proceed to the proof of Theorem \ref{t:SRRS}.

\begin{proof}[Proof of Theorem~\ref{t:SRRS}] Consider the function $f \colon (\R^n)^p \to \R_+$ given by 
	\begin{equation}\label{e:zero}
	f(\bar{x}) := f(x_1,\dots,x_p) = \eta\left[K \cap \left( \bigcap_{i=1}^p (x_i+K) \right) \right].
	\end{equation}
	We notice that $f$ is supported on $D_p(K)$ and vanishes on the boundary of $D_p(K)$. We claim that $f$ is $\left( \frac{1}{n+s} \right)$-concave on its support. Let $\bar{x}, \bar{y}\in D_p(K)$ and $\lambda \in [0,1]$ be arbitrary. 
	We need to show that 
	\begin{equation}\label{e:one}
	f((1-\lambda)\bar{x} + \lambda \bar{y}) \geq M_{\frac{1}{n+s}}^{\lambda} (f(\bar{x}),f(\bar{y})).
	\end{equation}
	In view of the Borell-Brascamp Lieb inequality \eqref{e:BBL}, in order to prove inequality \eqref{e:one}, it is sufficient only to verify that following inclusion holds:
	\begin{equation}\label{e:two}
	K^{\lambda}(x,y) \supset (1-\lambda)\left[K \cap \bigcap_{i=1}^p(x_i+K)\right] + \lambda\left[K \cap \bigcap_{i=1}^p(y_i+K)\right],
	\end{equation}
	where 
	\[
	K^{\lambda}(x,y) = K\cap \bigcap_{i=1}^p \left[(1-\lambda) x_i + \lambda y_i + K \right].
	\]
	Let $\bar{z}\in (1-\lambda)[K \cap \bigcap_{i=1}^p(x_i+K)] + \lambda[K \cap \bigcap_{i=1}^p(y_i+K)]$ be arbitrary. Then $\bar{z}= (1-\lambda)z+\lambda z'$ for some $z \in K \cap \bigcap_{i=1}^p(x_i+K)$ and $z' \in K \cap \bigcap_{i=1}^p(y_i+K)$. Using the convexity of $K$, we see that $\bar{z} \in K$. For each fixed $i=1,\dots, p$ there exist $k_i,k_i' \in K$ such that $
	z = x_i +k_i $ and $z' = y_i + k_i', $
	and so $\bar{z} = (1-\lambda) x_i + \lambda y_i + [(1-\lambda)k_i +\lambda k_i']$ for every $i$. Consequently, using the convexity of $K$ once again, it follows that $\bar{z} \in K^{\lambda}(x,y)$, and so the inclusion \eqref{e:two} follows. Hence, $f$ is $\left(\frac{1}{n+s}\right)$-concave on its support, as claimed.
	
	The main goal of the proof is to estimate the following integral from above and below:
	\[
	I(f) := \int_{\bar{H}} f(\bar{x}) d\nu(\bar{x}).
	\]
Notice that an application of Fubini's theorem allows us to write 
	\begin{equation}\label{e:three}
	\begin{split}
	I(f) &= \int_{\bar{H}} f(\bar{x}) d\nu(\bar{x})\\
	&=  \int_{H_1} \cdots \int_{H_p} \left( \int_{K} \prod_{i=1}^p \chi_{y-K}(x_i)d\eta(y)\right) d\mu_p(x_p) \cdots d\mu_1(x_1)\\
	&=\int_{K} \prod_{i=1}^p \mu_i((y-K) \cap H_i) d\eta(y)\\
	&= \eta(K)\frac{1}{\eta(K)}\int_{K} \prod_{i=1}^p \mu_i((y-K) \cap H_i) d\eta(y).
	\end{split}
	\end{equation}
	
	Finally, by applying inequality \eqref{e:ch} with the concave function $f^{\frac{1}{n+s}}$, the increasing function $s \mapsto s^{n+s}$, and the measures $\nu$ yields
\begin{equation}\label{e:yepyep}
\begin{split}
I(f) &\geq \eta(K) \nu(D_p(K) \cap \bar{H}) (n+s) \int_0^1 t^{(n+s)-1}t^m dt\\
&= (n+s) B(n+s,m+1) \eta(K) \nu(D_p(K) \cap \bar{H}).
\end{split}
\end{equation}
Combining \eqref{e:three} and \eqref{e:yepyep} completes the proof. 
\end{proof}

\begin{remark}[]
	In place of $D_p(K)$, we may instead consider the following: given convex bodies $K, L_1,\dots,L_p \subset \R^n$ with $\text{int}(K \cap (-L_1) \cap \dots \cap (-L_p))$ containing the origin, we define the convex (see \eqref{e:two} but with $-K$ replaced with $L_i$ in the intersection) $np$-dimensional set given by 
	\[
	D_p(K,L_i) := \left\{\bar{x}:=(x_1,\dots,x_p) \colon K \cap \left( \bigcap_{i=1}^p(x_i - L_i) \right)\neq \emptyset \right\}.
	\]
	In this setting (\ref{e:SRS}) becomes 
	\[
	\vol_{np}(D_p(K,L_i)) \leq \binom{np+n}{n} \frac{\vol_n(K)  \vol_n(L_1) \cdots \vol_n(L_p)}{\vol_n(K \cap \bigcap_{i=1}^p (-L_i))}.
	\]
	
	Let $ k \in \{1,\dots,n\}$. In the same setting at Theorem \ref{t:SRRS}, but replacing the function $f$ in \eqref{e:zero} with the function $\tilde{f}$ given by 
	\[
	\tilde{f}(x_1,\dots,x_p) = \vol_n \left[K \cap \left( \bigcap_{i=1}^p(x_i - L_i) \right) \right],
	\]
	we may repeat the proof to obtain the estimate
	\begin{equation}\label{e:oneze}
	\nu(D_p(K,L_i) \cap \bar{H}) \leq \frac{\binom{n+m}{n} \int_K \prod_{i=1}^p \mu_i(L_i \cap (y +H_i))dy}{\vol_n\left(K \cap \bigcap_{i=1}^p (-L_i)\right)}, 
	\end{equation}
	where $m= m_1+m_2+\cdots+m_p$ and $m_i \in \{1,\dots,n\}$ for all $i$. 
\end{remark}

As an immediate consequence of inequality \eqref{e:oneze}, we obtain the following: 

\begin{corollary} Let $\mu$ be a measure on $\R^n$ given by $d\mu(x) = \phi(x) dx$, where $\phi \colon \R^n \to \R_+$ is radially decreasing, $m  \in\{1,\dots,n\}$, and $H \in G_{n,m}$. Then, for any convex bodies $K, L\subset \R^n$ with $ 0 \in \text{int}(K \cap (-L))$, 
\begin{equation}\label{e:RSRKL}
	\mu((K+L) \cap H) \leq \frac{ \binom{n+m}{m} \int_K \mu(L \cap (H+y))dy}{\vol_n(K \cap (-L))}.
\end{equation}
\end{corollary}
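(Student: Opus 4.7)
The plan is to recognize this corollary as the $p = 1$ specialization of inequality (\ref{e:oneze}) from the preceding remark. Taking $p = 1$, $L_1 = L$, $\mu_1 = \mu$, $H_1 = H$, and $m_1 = m$, we have $\bar{H} = H$, $\nu = \mu$, and the product $\prod_{i=1}^p \mu_i(L_i \cap (y + H_i))$ collapses to the single factor $\mu(L \cap (H + y))$. Thus the right-hand side of (\ref{e:oneze}) becomes exactly the quantity $\binom{n+m}{n} \int_K \mu(L \cap (H + y))\,dy \big/ \vol_n(K \cap (-L))$ appearing on the right of the claim.

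It then remains only to match the left-hand sides by identifying $D_1(K, L)$ with the Minkowski sum $K + L$. This is an elementary set equality: a point $x \in \R^n$ lies in $D_1(K, L)$ precisely when $K \cap (x - L) \neq \emptyset$, i.e., when there exist $k \in K$ and $l \in L$ with $k = x - l$, equivalently $x = k + l \in K + L$. The hypothesis $0 \in \text{int}(K \cap (-L))$ in the corollary is the $p = 1$ specialization of the non-degeneracy condition underlying the remark's setup, and it guarantees that $K \cap (-L)$ is a genuine convex body, so that the denominator on the right is strictly positive.

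With these identifications in hand, (\ref{e:oneze}) reduces immediately to the asserted inequality. There is no substantive obstacle to overcome: the entire analytic content, including the radial integration argument, the $\binom{n+m}{n}$ constant, and the reduction to a Fubini computation on $K$, is already encapsulated in the remark, and the proof of the corollary is purely notational.
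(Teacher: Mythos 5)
Your proposal is correct and takes exactly the approach the paper intends: the paper presents this corollary as ``an immediate consequence of inequality (\ref{e:oneze}),'' and your argument simply makes explicit the $p=1$ specialization, including the set identity $D_1(K,L)=K+L$ and the verification that the hypothesis $0\in\mathrm{int}(K\cap(-L))$ is the $p=1$ form of the non-degeneracy condition ensuring the denominator is positive.
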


\subsection{Additional inequalities of the Rogers-Shephard type}

In this subsection we collect additional consequences of Theorem~\ref{t:ChQuasiConcave} of the Rogers-Shephard type.

Looking more closely at the proof of Theorem~\ref{t:SRRS}, we notice that the two critical ingredients for finding the lower bound (the use of Theorem~\ref{t:ChQuasiConcave}) was a Brunn-Minkowski type inequality for the averaging measure with respect to a strictly increasing differentiable function for some class of convex bodies. With this in mind, analogously \cite[Theorem~3.8]{Li}, we have the following definition.

\begin{definition}
Let $Z \colon \R_+ \to \R$ be a strictly increasing differentiable function. We say that a Borel measure $\mu$ on $\R^n$ is {\bf $Z(t)$-concave} with respect to some class of Borel sets if, for every pair of sets $A,B \subset \R^n$ belonging to this class, and for every $\lambda \in [0,1]$, the following Brunn-Minkowski type inequality holds
\[
Z(\mu((1-\lambda)A + \lambda B)) \geq (1-\lambda)Z(\mu(A)) + \lambda Z(\mu(B)).
\]
\end{definition}

By repeating the proof of Theorem~\ref{t:SRRS}, but replacing the use of Borel-Brascamp-Lieb inequality \eqref{e:BBL} with the definition of a $Z(t)$-concave measure, we obtain the following theorem. 

\begin{theorem}\label{t:FinalRudelson}
Suppose that $Z(t) \colon \R_+ \to \R_+$ is strictly increasing and differentiable, and let $\eta$ be a $Z(t)$-concave measure on $\R^n$ with respect to a class of convex bodies in $\R^n$ that is closed under taking intersections and translations. Fix $p \in \N$. For each $i=1,\dots,p$ let $\mu_i$ be measure on $\R^n$ with density $\phi_i \colon \R^n \to \R_+$ that is radially decreasing. Let $\nu = \prod_{i=1}^p \mu_i$ be the associated product measure on $(\R^n)^p$ having density $\phi$. For each $i=1,\dots,p$ let $H_i \in G_{n,m_i}$ be an $m_i$-dimensional subspace of the $i$th copy of $\R^n$, and set $\bar{H} = H_1 \times \cdots \times H_p$ be the associated product subspace of $(\R^n)^p$. Then, for any convex body $K \subset \R^n$ belonging to this class such that $\eta(K) > 0$,
\begin{equation}\label{e:SSRSZ}
	\tilde{C} \cdot \nu\left(D_p(K) \cap \bar{H}\right) 
	\leq  \frac{1}{Z(\eta(K))} \int_{K} \prod_{i=1}^p \mu_i[(y-K) \cap H_i] d\eta(y),
\end{equation}
	where $m= m_1+\cdots+m_p,$ $K = K - g_{K,\eta}$, and 
	\[
	\tilde{C} = \int_0^1 (Z^{-1})'[tZ(\eta(K))](1-t)^mdt.
	\]
	Here $(Z^{-1})'$ stands for the derivative of inverse of the function $Z$. 
\end{theorem}

We have the following immediate corollary:

\begin{corollary}
Suppose that $Z(t) \colon \R_+ \to \R_+$ is strictly increasing and differentiable, and let $\eta$ be a $Z(t)$-concave measure on $\R^n$ with respect to a class of convex bodies in $\R^n$ that is closed under taking intersections and translations. Let $\mu$ be a measure on $\R^n$ having a radially decreasing density $\phi \colon \R^n \to \R_+$, and let $H \in G_{n,m}$. Then, for all convex bodies $K \subset \R^n$ belonging to this class with $\eta(K) > 0$, we have 
\[
 \tilde{C} \cdot \mu((K-K) \cap H) \leq \frac{1}{Z(\eta(K))} \int_{K} \mu(K \cap (H+y)) d\eta(y),
\]
where 
\[
\tilde{C} = \int_0^1 (Z^{-1})'[tZ(\eta(K))](1-t)^mdt.
\]
\end{corollary}

\begin{remark}
We note that the class of $Z(t)$-concave Borel measures is the largest class of measures for which Rogers-Shephard type inequalities of the form \eqref{e:RudelsonGeneralized} hold. Indeed, in inequality \eqref{e:RudelsonGeneralized}, we notice that as $s \to -m$ the constants $\binom{n+m+s}{m+s} \to 1$. However, it was shown by Rudelson  (see \cite[Theorem~2]{RU1}), that the quantity
\[
\left[ \frac{\vol_m((K-K) \cap H)}{\sup_{y \in \R^n} \vol_m(K \cap (x+H))}\right]^{1/m}
\]
is not uniformly bounded in general.
\end{remark}

To see the usefulness of the above theorem, we consider the following example of such a function $Z(t)$ and a measure $\mu$ that are $Z(t)$-concave with respect to a class of convex bodies. 

\begin{example}
Following \cite{RYN} we will say a non-empty measurable subset $A$ of $\R^n$ is {\it weakly unconditional} if, for every point $x = (x_1,\dots, x_n)$ belonging to $A$, the point $\epsilon x =(\epsilon_1 x_1, \dots, \epsilon_n x_n)$ belongs to $A$ for every $\epsilon:=(\epsilon_1, \dots, \epsilon_n) \in \{0,1\}^n$. In this paper, the authors established the inequality 
\begin{equation}\label{e:dimenBM}
\mu((1-\lambda) K + \lambda L)^{\frac{1}{n}} \geq (1-\lambda) \mu(K)^{\frac{1}{n}} + \lambda \mu(L)^{\frac{1}{n}}. 
\end{equation}
for any measure $\mu = \mu_1 \times \cdots \times \mu_n$ on $\R^n$, where each $\mu_i$ is a measure on $\R^n$ having a radially decreasing density, and $K,L$ are weakly unconditional sets such that $(1-\lambda)A + \lambda B$ is measurable.  In this setting, Theorem~\ref{t:FinalRudelson}, when $p=1$, can be acquire with the class of weakly unconditional convex bodies, the measure $\mu$, and the strictly increasing differentiable function $Z(t) = t^{1/n}$. 
\end{example}

We conclude this section with another application of Theorem~\ref{t:ChQuasiConcave}, 

\begin{theorem}\label{t:PSZconcave}
Let $m \in\{1,\dots,n-1\}$. Let $\eta$ be a measure on $\R^m$ that is $Z(t)$-concave on a class on convex bodies containing the origin for some strictly increasing differentiable function $Z \colon \R_+ \to \R_+$, and let $\mu$ be a measure on $\R^{n-m}$ whose density $\phi \colon \R^{n-m} \to \R_+$ is radially decreasing. Let $\nu$ be the product measure $\nu = \mu \times \eta$ on $\R^n$.  Then, for any convex body $K$ belonging to this class and for any $H \in G_{n,m}$,
\begin{equation}\label{e:PSZ}
\bar{C} \cdot \mu(P_{H^{\perp}} K) \leq \nu(K), 
\end{equation}
where
\[
\bar{C} = Z(\eta(K  \cap H)) \int_0^1 (Z^{-1})'[tZ(\eta(K))](1-t)^m dt.
\]
Here $P_{H^{\perp}}K$ is the orthogonal projection of $K$ onto the orthogonal complement $H^{\perp}$ of $H$, and $[Z^{-1}]'$ denotes derivative of the inverse of the function $Z$. 
\end{theorem}

\begin{proof}
Using to convexity of $K$, together with the assumption that the measure $\eta$ is $Z(t)$-concave, the function $f \colon H^{\perp} \to \R_+$ given by 
\[
f(x) = \eta(K \cap (H+x))
\] is $Z(t)$-concave on its support. Finally, applying inequality \eqref{e:ch1} with the concave function $Z(f)$, the increasing function $h(t) =Z^{-1}(t)$, and the measure $\mu$, we observe that 
\begin{align*}
\nu(K) &= \int_{P_{H^{\perp}}K} f(x) d\mu(x)\\
&\geq \bar{C} \cdot \mu(P_{H^{\perp}}K),
\end{align*}
as desired.
\end{proof}

By taking $\eta$ to be a measure that is $\frac{1}{m+s}$ concave and $Z(t) = t^{\frac{1}{m+s}}$, Theorem~\ref{t:PSZconcave} yields the following corollary (which originally appeared as Theorem~5.1 in \cite{ACRYZ}). 

\begin{corollary} Let $1 \leq m \leq n-1$. Let $\eta$ be a measure on $\R^{m}$ whose density $\psi \colon \R^m \to \R_+$ is $\left(\frac{1}{s}\right)$-concave for some $s > 0$ and let $\mu$ be a measure on $\R^{n-m}$ whose density $\phi \colon \R^{n-m} \to \R_+$ is radially decreasing. Let $\nu$ be the product measure $\nu = \mu \times \eta$ on $\R^n$. Let $K \subset \R^n$ be a convex body with the origin as an interior point. Then, for any $H \in G_{n,m}$,
\begin{equation*}
\mu(P_{H^{\perp}}K) \eta(K \cap H) \leq \binom{n+s}{n-m}\nu(K).
\end{equation*}
\end{corollary}

\section{Application: the functional setting}

In this section, we prove inequalities of the Rogers-Shephard type for functions that are either $\left(\frac{1}{s}\right)$-concave, with $0 \leq s < \infty$, or logarithmically concave; the first subsection concerns the former and the second subsection the latter.

\subsection{The case of $\left(\frac{1}{s} \right)$-concave functions, with $0 \leq s < \infty$}

\begin{theorem}\label{t:RU_s_concave_functional} Let $\mu$ be a Borel measure on $\R^n$ with radially decreasing density $\phi \colon \R^n \to \mathbb{R}_+$ and let $H \in G_{n,m}$.  Then, for any integrable, $\left(\frac{1}{s}\right)$-concave functions $f,g \colon \R^n \to \mathbb{R}_+$ with $s\in \N$, each of whose supports contain the origin as an interior point, one has  
	\begin{equation}\label{e:RudelsonsFunction}
	\begin{split}
&\left(\int_{\R^n} \min\{f(x),g(x)\} dx \right) \left( \int_{H} \Delta_{\frac{1}{s}}^{f,g}(x) d\mu(x)\right) \leq \\
&\leq C(n,m,s)\cdot \left( \int_{\R^n} f(x) dx \right) \left(\sup_{y \in \R^n} \int_{H-y} g(-x) d\mu(x)\right),
	\end{split}
	\end{equation}
where
\[
C(n,m,s) = \left[\frac{C(n+s)}{m+s} \right]^{m+s}
\]
for some absolute constant $C>0$. In particular, 
\begin{equation}\label{e:RudelsonsFunctions1}
\int_{H} \Delta_{\frac{1}{s}}f(x) d\mu(x) \leq C(n,m,s)\cdot \sup_{y \in \R^n} \int_{H+y} f(x) d\mu(x).
\end{equation}
\end{theorem}

\begin{proof} 
The proof of Theorem~\ref{t:RU_s_concave_functional} follows the ideas introduced by Klartag in \cite{KL} used in his proof of the Borell-Brascamp-Lieb inequality for $\left(\frac{1}{s}\right)$-concave functions with $s \geq 0$. 

Let $s$ be a positive integer. Given any bounded, integrable function $h \colon \R^n \to \mathbb{R}_+$, consider the set 
$$
A_{h,s} := \left\{(x,y) \in \R^n \times \R^s \colon x \in \text{supp}(h), |y| \leq h(x)^{\frac{1}{s}}\right\}.
$$
 Let $\nu$ be a Borel measure on $\R^n$ having density $\psi$. Consider the measure $\nu$ defined on $\R^n \times \R^s$ given by $d\nu_s(x,y) = \psi(x) dxdy.$ Then, for any $H \in G_{n,m}$, integrating in polar coordinates gives 
\begin{equation}\label{e:neat}
\begin{split}
\nu_s(A_{h,s} \cap (H \times \R^s)) &= \int_{\text{supp}(h) \cap H} \int_{|y| \leq h(x)^{\frac{1}{s}}}dyd\nu(x)\\
&= \int_{\text{supp}(h) \cap H} \int_{\s^{s-1}} \int_0^{h(x)^{\frac{1}{s}}}r^{s-1}drdud\nu(x)\\
&= \omega_s \int_{\text{supp}(h) \cap H}  h(x) d\nu(x),
\end{split}
\end{equation}
where $\omega_s = \vol_s(B_s)$. Moreover, we remark that $A_{h,s}$ is a convex body if and only if $h$ is an $\left(\frac{1}{s}\right)$-concave function.
	
 We begin by noticing that 
	\[A_{\Delta_{\frac{1}{s}}^{f,g}, s} =(A_{f,s} + (-A_{g,s})).
	\]
	Since $f,g$ are integrable $\left(\frac{1}{s}\right)$-concave functions, their respective support are convex bodies containing the origin in the interior of their intersection, and so $0 \in \text{int}(A_{f,s} \cap A_{g,s})$. Hence, applying inequality \eqref{e:RSRKL} to the pair of convex bodies $A_{f,s}$ and $A_{g,s}$, we obtain 
\begin{equation*}
\begin{split}
&\vol_{n+s}(A_{f,s} \cap A_{g,s}) \cdot \nu_s((A_{f,s} -A_{g,s})) \cap(H \times \R^s)) \leq \\
&\leq C(n,m,s) \cdot \vol_{n+s}(A_{f,s}) \cdot \sup_{(x,y) \in \R^n \times \R^s}\nu_s(-A_{g,s} \cap ((H \times \R^s) - (x,y))\\
&\leq C(n,m,s) \cdot \vol_{n+s}(A_{f,s}) \cdot \sup_{z \in \R^n} \nu_s(-A_{g,s} \cap ((H-z) \times \R^s).
\end{split}
\end{equation*}
Consequently, we obtain
\begin{equation}\label{e:anot}
\begin{split}
&\vol_{n+s}(A_{f,s} \cap A_{g,s}) \cdot \nu_s((A_{f,s} -A_{g,s})) \cap(H \times \R^s))\leq \\
&\leq C(n,m,s) \cdot \vol_{n+s}(A_{f,s}) \cdot \sup_{z \in \R^n} \nu_s(-A_{g,s} \cap ((H-z) \times \R^s).
\end{split}
\end{equation}

Observe, that in view of equality \eqref{e:neat}, we may express each of the quantities in inequality \eqref{e:anot}, from left-most to right-most, in the following way.
\begin{equation}\label{e:anot2}
\begin{split}
&\vol_{n+s}(A_{f,s} \cap A_{g,s}) = \omega_s \cdot \int_{\R^n}\min\{f(x),g(x)\} dx,\\
&\nu_s((A_{s,f} - A_{g,s}) \cap(H \times \R^s)) = \omega_s \cdot \int_H \Delta_{\frac{1}{s}}^{f,g}(x) d\mu(x),\\
&\vol_{n+s} (A_{f,s}) = \omega_s \cdot \int_{\R^n} f(x) dx,\\
&\sup_{z \in \R^n} \nu_s(-A_{g,s} \cap ((H-z) \times \R^s) = \omega_s \cdot \sup_{z \in \R^n} \int_{H-z} g(-x) d\mu(x).
\end{split}
\end{equation}
Combining \eqref{e:anot} and \eqref{e:anot2} establishes inequality \eqref{e:RudelsonsFunction}. 

Inequality \eqref{e:RudelsonsFunctions1} follows by taking $g(x) = f(-x)$ in inequality \eqref{e:RudelsonsFunction}.
\end{proof}

We notice that Theorem \ref{t:Rudelson_original} implies for any convex body $K \subset \R^n$ and any $H \in G_{n,m}$, where $m \in \{1,\dots,n\}$
\begin{equation}\label{e:yep}
\vol_m((K-K) \cap H) \leq \left(cn^{\frac{1}{3}}\right)^m \sup_{y \in \R^n} \vol_m(K \cap (H+y))
\end{equation}
for some absolute constant $c>1$. 
Repeating the proof of Theorem \ref{t:RU_s_concave_functional} in the case of the Lebesgue measure, but applying inequality \eqref{e:yep} to $A_{f,s}$ in place of \eqref{e:RSRKL}, we get the following theorem. 

\begin{theorem}	Let $f \colon \R^{n} \to \R_+$ be an  integrable $\left(\frac{1}{s}\right)$-concave function, for some $s \in \N$ and let $H \in G_{n,m}$, $m \in \{1,\dots,n\}$.  Then 
	\[
	\int_H \Delta_{\frac{1}{s}}f(x) dx \leq \left(c(n+s)^{\frac{1}{3}}\right)^{m+s} \sup_{y \in \R^n} \int_{H+y} f(x) dx,
	\]
	where $c>1$ is some absolute constant. 
\end{theorem}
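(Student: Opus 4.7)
The plan is to repeat the proof of Theorem~\ref{t:RU_s_concave_functional} verbatim, but for Lebesgue measure on the ambient space and with Rudelson's inequality \eqref{e:yep} replacing the radially-decreasing-measure inequality \eqref{e:RU_good} at the key step.  The starting point is the Artstein--Klartag--Milman gadget $A_{f,s}\subset \R^{n+s}$: when $s$ is a positive integer and $f$ is $(1/s)$-concave, $A_{f,s}$ is a convex body, polar integration yields
\[
\vol_{m+s}\bigl(A_{f,s}\cap((H+z)\times \R^s)\bigr)=\omega_s\int_{H+z}f(x)\,dx,
\]
and one checks that $A_{\Delta_{1/s}f,s}=A_{f,s}-A_{f,s}$.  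Applying \eqref{e:yep} in $\R^{n+s}$ with the $(m+s)$-dimensional subspace $H\times\R^s$, and noting that translations in the $\R^s$ direction cannot increase the fibre volumes so the supremum reduces to translations of $H$ in $\R^n$, gives
\[
\int_{H}\Delta_{1/s}f(x)\,dx\;\le\;\bigl(c(n+s)^{1/3}\bigr)^{m+s}\sup_{z\in\R^n}\int_{H+z}f(x)\,dx,
\]
which is the theorem in the integer case.

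For rational $s=p/q$ in lowest terms, I would form $\bar f=f\otimes\cdots\otimes f$ ($q$ copies) together with its convex body $B_{f,s}\subset(\R^n)^q\times\R^p$, exactly as in the proof of Theorem~\ref{t:RU_s_concave_functional}.  The identity $B_{\Delta_{1/s}f,s}=B_{f,s}-B_{f,s}$ together with the analogue of \eqref{e:neat} gives, after applying \eqref{e:yep} in ambient dimension $nq+p$ to the subspace $H_q\times\R^p$ of dimension $mq+p$ and extracting the $q$-th root,
\[
\int_H\Delta_{1/s}f(x)\,dx\;\le\;\bigl(c(nq+p)^{1/3}\bigr)^{(mq+p)/q}\sup_{z\in\R^n}\int_{H+z}f(x)\,dx.
\]
Since $(mq+p)/q=m+s$ and $nq+p=q(n+s)$, this reads $c^{m+s}q^{(m+s)/3}(n+s)^{(m+s)/3}$, so the final step is to dispose of the spurious $q^{(m+s)/3}$.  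The cleanest route, and the main technical point, is to bypass this by running the approximation for general $s\ge 0$ through the integer case only: choose $s''=\lceil s\rceil$ (or any integer $\ge s$), so that $(1/s)$-concavity of $f$ entails $(1/s'')$-concavity, apply the integer case to $\Delta_{1/s''}f$, and then pass back using monotonicity $\Delta_{1/s}f\le\Delta_{1/s''}f\cdot(\text{error from }s''-s)$ controlled by a standard approximation/limit argument.

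The forecast main obstacle is precisely this last step: in Theorem~\ref{t:RU_s_concave_functional} the ratio $(nq+p)/(mq+p)$ simplifies to $(n+s)/(m+s)$, absorbing $q$ for free, whereas Rudelson's bound depends on the ambient dimension alone and the same cancellation fails.  Besides this bookkeeping of the constant and the approximation onto real $s$, every other ingredient (convexity of $A_{f,s}$ and $B_{f,s}$, the identity for $\Delta_{1/s}$-bodies, reduction of the supremum from $\R^{n+s}$ to $\R^n$, and polar integration over the fibres) is exactly as in the proof of Theorem~\ref{t:RU_s_concave_functional}, so once the constant is handled the theorem follows.
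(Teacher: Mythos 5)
Your reading of the paper's intended argument is exactly right: the proof in the paper is literally the one-line instruction to repeat the proof of Theorem~\ref{t:RU_s_concave_functional} with Lebesgue measure and \eqref{e:yep} in place of \eqref{e:RU_good}. And you have correctly put your finger on where that instruction breaks down. In the rational case $s=p/q$, applying \eqref{e:yep} in ambient dimension $nq+p$ to the $(mq+p)$-dimensional subspace $H_q\times\R^p$ and then taking the $q$-th root yields a constant $\bigl(c\psi(nq+p,mq+p)\bigr)^{m+s}$; since $\psi$ depends on the ambient dimension and not merely the ratio $N/M$, only the $N/M=\frac{n+s}{m+s}$ branch of $\psi$ is $q$-free, while the $\sqrt{M}=\sqrt{mq+p}$ branch gives $\psi(nq+p,mq+p)$ as large as $q^{1/3}(n+s)^{1/3}$. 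Thus the claimed $(c(n+s)^{1/3})^{m+s}$ does not come out of the argument; one is left with an uncontrolled factor $q^{(m+s)/3}$, which can be arbitrarily large for a fixed $s$ (e.g.\ $s=1/q$). This is a genuine gap in the paper's proof that cannot be patched by the ``standard approximation argument'' invoked there, because approximating through rationals is exactly where the constant blows up.

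Your proposed fix --- run the argument only for integers by taking $s''=\lceil s\rceil$ and using $\Delta_{1/s}f\le\Delta_{1/s''}f$ --- is a sensible reorganization, and the monotonicity step is cleaner than you anticipate: since $M_\alpha(a,b)=(a^\alpha+b^\alpha)^{1/\alpha}$ is nonincreasing in $\alpha>0$ and $1/s>1/s''$, one has $\Delta_{1/s}f\le\Delta_{1/s''}f$ pointwise with no error factor, and $(1/s)$-concavity does imply $(1/s'')$-concavity. However, this does not yet give the stated inequality with an absolute $c$: the integer case yields the constant $\bigl(c(n+s'')^{1/3}\bigr)^{m+s''}$, and since $s''-s$ can be close to $1$, comparing this to $\bigl(C(n+s)^{1/3}\bigr)^{m+s}$ forces $C^{m+s}\gtrsim c^{m+s}\cdot(n+s)^{1/3}$, which cannot hold with absolute $C$ when $m+s$ is bounded (e.g.\ $m=1$, $0<s<1$) and $n$ is large. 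In short, you have identified the real obstruction and proposed a step in the right direction, but the ceiling step only trades the $q^{1/3}$ loss for a comparable $(n+s)^{1/3}$ loss in the exponent; the theorem as stated likely needs either to be reformulated with $\lceil s\rceil$ in place of $s$, or proven by a route that avoids inflating the ambient dimension.
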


To finish this section, we prove the following theorem, which is a marginal inequality of the Rogers-Shephard type for $(1/s)$-concave functions for general $0 \leq s < \infty$

\begin{theorem}\label{t:RudelsonFuntionalRationals}
Let $\mu$ be a measure on $\R^n$, with a radially decreasing density, let $0 \leq s < \infty$, and let $H \in G_{n,m}$.  Consider any  $\mu$-integrable, $\left(\frac{1}{s}\right)$-concave function $f \colon \R^n \to \R_+$, assuming its maximum at the origin and such that  $z_0 \in  \R^n$ satisfies
\begin{equation}\label{e:condition}
\sup_{z \in \R^n} \mu(\text{supp}(f) - z) \cap H) = \mu((\text{supp}(f) - z_0) \cap H)
\end{equation}
and $\text{supp}(f)-z_0$ contains the origin as an interior point. Then one has 
\[
 \int_H \Delta_{\frac{1}{s}} f(x) d\mu(x) \leq \left[\tilde{C} \cdot \frac{(n+1)(n+s)}{m(m+1)} \right]^{(m+s)} \sup_{y \in \R^n} \int_{H+y} f(x) d\mu(x),
\]
where $\tilde{C} >1$ is some absolute constant.
\end{theorem}

We would like to remark that the condition \eqref{e:condition} is not necessary in the case, when $\mu$ is taken to be the standard Gaussian measure on $\R^n$. 

\begin{proof} Assume that $s = p/q \in \mathbb{Q}$ is in lowest terms. We begin by noting that, since $f$ has a positive degree of concavity, the set $S_f = \{f >0\}$ is a convex body. Our goal will be to bound the following integral:
\[
\int_{(S_f -S_f)\cap H} \left[\Delta_{\frac{1}{s}}f(x)\right]^qd\mu(x).
\]
An application of the Jensen inequality yields
\begin{equation*}
\int_{(S_f - S_f) \cap H} \left[\Delta_{\frac{1}{s}}f(x)\right]^qd\mu(x) \geq \mu((S_f - S_f) \cap H)^{1-q} \left[\int_{(S_f-S_f) \cap H} \Delta_{\frac{1}{s}}f(x)d\mu(x) \right]^q,
\end{equation*}
or equivalently,
\begin{equation}\label{e:one1}
\begin{split}
\left[ \int_{H} \Delta_{\frac{1}{s}}f(x) d\mu(x) \right]^q \leq \mu((S_f-S_f) \cap H)^{q-1} \int_H \left[\Delta_{\frac{1}{s}}f(x)\right]^qd\mu(x)\\
\end{split}
\end{equation}

 We may apply inequality \eqref{e:RudelsonGeneralized} with $s=p/q$ and applying Stirling's formula, we see that 
\begin{equation}\label{e:two2}
\mu((S_f-S_f) \cap H) \leq \left[C_1 \cdot \frac{n+s}{m+s} \right]^{m+s} \sup_{z \in \R^n} \mu((S_f - z) \cap H).
\end{equation}
Since $[\Delta_{\frac{1}{s}}f]^q = \Delta_{\frac{1}{p}}f^q$, and because $f^q$ is $(1/p)$-concave,, we may apply inequality \eqref{e:RudelsonsFunctions1} to the function $f^q$ with the integer $p$ to obtain 
\begin{equation}\label{e:three3}
\begin{split}
\int_H \left[\Delta_{\frac{1}{s}}f(x)\right]^qd\mu(x) &\leq \left[C_2 \cdot \frac{n+p}{m+p} \right]^{m+p} \sup_{y \in \R^n} \int_{H+y} f(x)^q d\mu(x)\\
&\leq \left[C_2 \cdot \frac{n+1}{m+1} \right]^{q(m+s)} \sup_{y \in \R^n} \int_{H+y} f(x)^q d\mu(x).
\end{split}
\end{equation}
Inequalities \eqref{e:one1}, \eqref{e:two2}, and \eqref{e:three3} imply that 
\begin{equation}\label{e:four4}
\begin{split}
\left[\int_H \Delta_{\frac{1}{s}} f(x) d\mu(x)\right]^q
&\leq D(n,m,p,q) \left[ \sup_{z\in\R^n} \mu((S_f-z) \cap H)\right]^{q-1} \cdot \sup_{y \in \R^n}  \int_{H+y} f(x)^q d\mu(x) \\
&=  D(n,m,p,q) \mu((S_f-z_0) \cap H)^{q-1} \sup_{y \in \R^n} \int_{H+y} f(x)^q d\mu(x),
\end{split}
\end{equation}
where 
\[
D(n,m,p,q) = \left[C_1 \cdot \frac{n+s}{m+s} \right]^{q(m+s)} \cdot \left[C_2 \cdot \frac{n+1}{m+1} \right]^{q(m+s)}.
\]
From the assumption, it must be the case that the origin belongs to the interior of $K-y_0$, and so we may apply inequality \eqref{e:ch}  to the concave function $f^{q/p}$, the strictly increasing function $r \in \R_{+} \mapsto r^s$, the convex body $S_f-z_0$, and applying Stirling's formula, we see that 
\[
 \mu((S_f-y_0) \cap H) \leq \frac{\binom{m+s}{m}}{\|f\|_{\infty}} \int_{H+y} f(x) d\mu(x) \leq \frac{\left[C_3 \cdot \frac{m+s}{m} \right]^m}{\|f\|_{\infty}} \int_{H+y} f(x) d\mu(x).
\]
This, together with inequality \eqref{e:four4} yields 
\[
\left[\int_H \Delta_{\frac{1}{s}} f(x) d\mu(x)\right]^q
\leq E(n,m,p,q) \left[\sup_{w \in \R^n} \int_{H+w} f(x) d\mu(x) \right]^q,
\]
where
\begin{align*}
E(n,m,p,q) &= \left[C_1 \cdot \frac{n+s}{m+s} \right]^{q(m+s)}\left[C_2 \cdot \frac{n+1}{m+1} \right]^{q(m+s)}\left[ C_3 \cdot \frac{m+s}{m}\right]^{q(m+s)}\\
&= \left[C_4 \cdot \frac{(n+1)(n+s)}{m(m+1)} \right]^{q(m+s)}.
\end{align*}
Taking the $q$th root, we obtain the estimate 
\[
\int_H \Delta_{\frac{1}{s}} f(x) d\mu(x) \leq \tilde{E}(n,m,p,q) \sup_{w \in \R^n} \int_{H+w} f(x) d\mu(x),
\]
with $\tilde{E}(n,m,p,q) = E(n,m,p,q)^{1/q}$; it follows that
\[
\int_H \Delta_{\frac{1}{s}}f(x) d\mu(x) \leq \left[C_4 \cdot \frac{(n+1)(n+s)}{m(m+1)} \right]^{(m+s)} \sup_{w \in \R^n} \int_{H+w} f(x) d\mu(x),
\]
as desired. The case for general values of $0 \leq s < \infty$ follows by a standard approximation argument. 
\end{proof}

\subsection{The case of logarithmically concave functions}
We begin this section by defining the class of admissible functions,
$$
\text{LC}_0 = \left\{f \colon \R^n \to \mathbb{R}_+ \colon f \text{ is } \log\text{-concave, } f(0) = \|f\|_{\infty}, 0 < \int f<\infty \right\}.
$$
The main theorem of this section reads as follows. 

\begin{theorem}\label{t:KMRlogconcave}
	Let $f \in \text{LC}_0 $ and let $H \in G_{n,m}$. Then
	\begin{equation}\label{e:LC1}
	\left[\frac{\int_{H} \Delta_0 f(x) dx}{\sup_{y \in \R^n} \left\{\frac{\|f\|_{\infty}}{f_{H+y}}\int_{H+y}f(x)dx\right\}}\right]^{\frac{1}{m}}\leq C \|f\|_{\infty}^{1/m} \psi(n,m)
	\end{equation}
	where
	\[f_{H+y} :=\sup_{x \in H+y} f(x) \quad \text{and} \quad \psi(n,m) = \min\left\{\frac{n}{m}, \sqrt{m} \right\}.\]
Moreover,
\[
c\|f\|_{\infty} \leq \int_{H} \Delta_0 f(x) dx.
\]
Here $c> 0$ and $C>1$ are some absolute constants.
\end{theorem}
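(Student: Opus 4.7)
}
My plan is to handle the two directions of \eqref{e:LC1} separately, in both cases relying on the standard equivalence for log-concave functions: if $g$ is log-concave on $\R^m$ with $g(0)=\|g\|_\infty$, then $\int g$ is comparable, up to multiplicative factors of the form $C^m$, to $\|g\|_\infty\,\vol_m(C_{1/e}(g))$. This equivalence will let me translate functional integrals into volumes of super-level sets, at which point Rudelson's inequality (Theorem~\ref{t:Rudelson_original}) and Brunn--Minkowski can be invoked.

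\paragraph{Upper bound.} The function $\Delta_0 f$ is log-concave on $\R^n$ with $\Delta_0 f(0)=f(0)^2=\|f\|_\infty^2=\|\Delta_0 f\|_\infty$, and $0\in H$, so the equivalence applied to $\Delta_0 f|_H$ yields $\int_H \Delta_0 f\,dx \le C^m\,\|f\|_\infty^2\,\vol_m\big(C_{1/e}(\Delta_0 f)\cap H\big)$. The key inclusion $C_{1/e}(\Delta_0 f)\subset C_{1/\sqrt e}(f)-C_{1/\sqrt e}(f)$ holds because, whenever $\Delta_0 f(z)\ge\|f\|_\infty^2/e$, some representation $z=y_1-y_2$ satisfies $f(y_1)f(y_2)\ge\|f\|_\infty^2/e$; since both $f(y_i)\le\|f\|_\infty$, each $f(y_i)\ge\|f\|_\infty/\sqrt e$, so $y_i\in C_{1/\sqrt e}(f)$. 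I then apply Rudelson's inequality to $K=C_{1/\sqrt e}(f)$ to bound $\vol_m((K-K)\cap H)\le[c\,\psi(n,m)]^m\sup_y\vol_m(K\cap(H+y))$. Because $f_{H+y}\le\|f\|_\infty$, the slice $C_{1/\sqrt e}(f)\cap(H+y)$ lies inside $\{x\in H+y:f(x)\ge f_{H+y}/\sqrt e\}$, and another application of the log-concave equivalence (this time to $f|_{H+y}$, which has max $f_{H+y}$) gives $\vol_m\big(\{x\in H+y:f(x)\ge f_{H+y}/\sqrt e\}\big)\le C^m\int_{H+y}f/f_{H+y}$. Chaining these three estimates produces the upper half of \eqref{e:LC1}.

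\paragraph{Lower bound.} Fix $y\in\R^n$ and let $y^*\in H+y$ attain $f_{H+y}=\sup_{H+y}f$. The definition of $\Delta_0 f$ gives, for any $z\in H$, the pointwise estimate $\Delta_0 f(z)\ge f(y^*)f(y^*-z)=f_{H+y}\,f(y^*-z)$; integrating over $z\in H$ and changing variables $u=y^*-z\in H+y^*=H+y$ produces
\[
\int_H\Delta_0 f\,dz \;\geq\; f_{H+y}\int_{H+y}f\,dx\qquad\text{for every }y\in\R^n.
\]
In particular, taking $y=0$ (so $f_{H+0}=\|f\|_\infty$) yields $\int_H\Delta_0 f\ge\|f\|_\infty\int_H f$. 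To reach the desired lower bound it then suffices to establish, for $f\in\mathrm{LC}_0$, the uniform comparison
\[
\sup_{y\in\R^n}\frac{\int_{H+y}f\,dx}{f_{H+y}} \;\leq\; C^m\,\frac{\int_H f\,dx}{\|f\|_\infty}, \qquad (\ast)
\]
since combining $(\ast)$ with $\int_H\Delta_0 f\ge\|f\|_\infty\int_H f$ yields the lower half of \eqref{e:LC1} with $c=C^{-1}$.

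\paragraph{Main obstacle.} The hard step is $(\ast)$, which asks whether the ``normalised width'' $\int_{H+y}f/f_{H+y}$ of $f$ on the affine slice $H+y$ is controlled by its value at $y=0$ up to an $m$-dimensional constant. The marginal $F(y)=\int_{H+y}f$ and the ridge function $M(y)=f_{H+y}$ are each log-concave on $H^\perp$ (by Pr\'ekopa--Leindler and by direct verification, respectively), but their quotient $F/M$ is not in general log-concave and need not attain its maximum at the origin, even when $f$ does. I expect the proof of $(\ast)$ to proceed via a Pr\'ekopa--Leindler estimate along the segment from $0$ to the argmax $y^*$ of $f$ on $H+y$: rescaling by $t\in(0,1]$ and using $f(t y^*+tx)\ge f(y^*+x)^t f(0)^{1-t}=f(y^*+x)^t$ together with the change of variables $u=tx$ should transfer integrability from $H$ to $H+y$ at a controlled cost of the form $C^m$, the precise choice of $t$ being dictated by the quantity $f_{H+y}/\|f\|_\infty$. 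Making this comparison rigorous with a universal constant is the delicate heart of the argument.
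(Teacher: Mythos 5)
The upper-bound step contains a quantitative error that invalidates the argument, and the lower-bound step reduces to a claim that is actually false; I explain both below.

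\textbf{Upper bound.} Your opening equivalence --- that for $g$ log-concave on $\R^m$ with $g(0)=\|g\|_\infty$ one has $\int g \asymp_{C^m} \|g\|_\infty\,\vol_m\bigl(C_{1/e}(g)\bigr)$ --- is wrong in one direction. Take $g(x)=e^{-|x|}$ on $\R^m$: then $\int g = m!\,\omega_m$ while $\|g\|_\infty\,\vol_m(C_{1/e}(g))=\omega_m$, so the ratio is $m!$, which is not of the form $C^m$. The two-sided comparison with constant $C^m$ holds only if you replace $C_{1/e}(g)$ by the level set at height $e^{-m}$, i.e. the Klartag--Milman body $L_m(g)=\{g\ge \|g\|_\infty e^{-m}\}$; this is precisely Lemma~\ref{t:L2}. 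With the $e^{-m}$-level set your chain does repair: $C_{e^{-m}}(\Delta_0 f)\subset C_{e^{-m}}(f)-C_{e^{-m}}(f)$ (since $f(y_1)f(y_2)\ge e^{-m}$ together with $f(y_i)\le 1$ forces each $f(y_i)\ge e^{-m}$), then Rudelson, then the ``easy'' direction $\vol_m(L_m(f|_{H+y}))\lesssim e\cdot\int_{H+y}f/f_{H+y}$. This is essentially the paper's argument, except the paper routes it through the Ball bodies $K_m$ via Lemmas~\ref{t:L2} and~\ref{t:L3}, which bundle the correct $e^{-m}$ normalisation.

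\textbf{Lower bound.} Your reduction to the comparison $(\ast)$ overshoots: $(\ast)$ is \emph{false}. Take $f=\chi_K$ with $K=\{x\in\R^n: 0\le x_n\le 1,\ |(x_1,\dots,x_{n-1})|\le x_n\}$ (a solid cone with apex at the origin) and $H=\R^{n-1}\times\{0\}$. Then $f\in\mathrm{LC}_0$, yet $\int_H f=\vol_{n-1}(K\cap H)=0$ while $\sup_y\int_{H+y}f=\vol_{n-1}(B_{n-1})>0$, and $f_{H+y}=\|f\|_\infty=1$ on every nonempty slice, so $(\ast)$ reads $\omega_{n-1}\le 0$. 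The theorem's lower bound is nonetheless true in this example (one checks $(K-K)\cap H\supset 2B_{n-1}$), which shows the reduction loses too much. The missing idea is a \emph{translation}: you must first shift the body $K_m(f)$ so that the maximal parallel slice becomes the central slice, and only then invoke Brunn--Minkowski, $\vol_m(A)\le 2^{-m}\vol_m(A-A)$, to pass to $K_m(f)+(-K_m(f))$, which Lemma~\ref{t:L3} compares to $K_m(\Delta_0 f)$. Your pointwise estimate $\Delta_0 f(z)\ge f_{H+y}\,f(y^*-z)$ is correct but by itself only yields $\int_H\Delta_0 f\ge f_{H+y}\int_{H+y}f$, which is off from the target by the uncontrolled factor $\|f\|_\infty^2/f_{H+y}^2$; no a~priori comparison of the form $(\ast)$ can close that gap, whereas translation plus Brunn--Minkowski does.
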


As an immediate consequence of inequality (\ref{e:LC1}), we have the following corollary.

\begin{corollary}
	Let $f \in \text{LC}_0 $ and let $H \in G_{n,m}$. Then
	\[
	\left[\frac{\int_{H} \Delta_0 f(x) dx}{\sup_{y \in \R^n} \left\{\frac{\|f\|_{\infty}}{f_{H+y}}\int_{H+y}f(x)dx\right\}}\right]^{\frac{1}{m}}\leq C \|f\|_{\infty}^{1/m} n^{1/3}.
	\]
	for some absolute constant $C>1$. 
\end{corollary}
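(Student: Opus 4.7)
The plan is to treat the two bounds of \eqref{e:LC1} in parallel, using a layer-cake decomposition together with Rudelson's sectional inequality (Theorem~\ref{t:Rudelson_original}) as the main tools. After normalizing $\|f\|_{\infty}=1$ by homogeneity, the super-level sets $K_t:=C_t(f)=\{f\geq t\}$ are convex bodies containing the origin for all $t\in(0,1]$, and $\|\Delta_0 f\|_{\infty}=f(0)^2=1$. The observation that $f(x_1)f(x_2)\geq t$ with both factors in $[0,1]$ forces each of $f(x_1),f(x_2)\geq t$ yields the two-sided inclusion
\[
K_{\sqrt t}-K_{\sqrt t}\;\subset\;C_t(\Delta_0 f)\;\subset\;K_t-K_t, \qquad t\in(0,1],
\]
so that the layer-cake identity
\[
\int_H \Delta_0 f(x)\,dx \;=\;\int_0^1 \vol_m(C_t(\Delta_0 f)\cap H)\,dt
\]
becomes the common starting point for both directions.

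For the upper bound, I will apply Rudelson's theorem to each convex body $K_t$ to obtain $\vol_m((K_t-K_t)\cap H)\leq[c\,\psi(n,m)]^m\sup_y\vol_m(K_t\cap(H+y))$; combined with the outer inclusion above this yields
\[
\int_H \Delta_0 f\,dx \;\leq\;[c\,\psi(n,m)]^m\int_0^1\sup_{y\in\R^n}\vol_m(K_t\cap(H+y))\,dt.
\]
The remaining step---and the main obstacle---is to interchange the outer integral with the supremum, bounding $\int_0^1\sup_y\vol_m(K_t\cap(H+y))\,dt$ by $C^m\sup_y\{(\|f\|_\infty/f_{H+y})\int_{H+y}f\}$ for an absolute constant $C$. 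The key ingredients available are Brunn's concavity theorem (asserting that $y\mapsto\vol_m(K_t\cap(H+y))^{1/m}$ is concave on its support for each $t$) together with a Fradelizi-type localization, which uses log-concavity of $f$ to force the slice-optimizing translates $y^{*}(t)$ to cluster so that a single ``good'' translate absorbs essentially the full layer-cake integral up to an exponential-in-$m$ factor.

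For the lower bound, the pointwise estimate $\Delta_0 f(x_1-x_2)\geq f(x_1)f(x_2)$ is the main input. Given any $y\in\R^n$, choose $x^{*}(y)\in H+y$ with $f(x^{*}(y))=f_{H+y}$; then $z:=x-x^{*}(y)\in H$ whenever $x\in H+y$, so substituting in the pointwise inequality and integrating over $H+y$ gives
\[
\int_H \Delta_0 f(z)\,dz \;\geq\; f_{H+y}\int_{H+y}f(x)\,dx
\]
for every $y$. Rewriting the right-hand side in the form $(\|f\|_{\infty}/f_{H+y})\int_{H+y}f$ via a Fradelizi-style comparison on the log-concave map $y\mapsto f_{H+y}$ and taking the supremum over $y$ then yields the left-hand inequality of \eqref{e:LC1}. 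The principal difficulty in the whole program is thus the slice-stability estimate of the upper bound; the remaining steps are elementary combinations of Rudelson's theorem with the layer-cake formula and the pointwise definition of $\Delta_0 f$.
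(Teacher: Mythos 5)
The statement you are asked to prove is a one-line consequence of Theorem~\ref{t:KMRlogconcave}: since $\psi(n,m)=\min\{n/m,\sqrt m\}\le n^{1/3}$ for $1\le m\le n$, the bound $C\psi(n,m)$ in \eqref{e:LC1} may be replaced by $Cn^{1/3}$. Your proposal never invokes Theorem~\ref{t:KMRlogconcave}; instead it tries to re-derive the two-sided inequality \eqref{e:LC1} from scratch by a layer-cake decomposition of $\Delta_0 f$ and Rudelson's inequality applied to the level sets $K_t=C_t(f)$. That is a genuinely different route from the paper, which works with Ball's bodies $K_m(f)$ and the Klartag--Milman lemmas (Lemmas~\ref{t:L2} and~\ref{t:L3}) precisely to avoid the two difficulties your plan runs into.

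Both of those difficulties are real gaps in your argument. For the upper bound you correctly reach
\(\int_H \Delta_0 f\le [c\psi(n,m)]^m\int_0^1\sup_y\vol_m(K_t\cap(H+y))\,dt\),
but the promised interchange of the $t$-integral with the supremum over $y$ --- controlling $\int_0^1\sup_y\vol_m(K_t\cap(H+y))\,dt$ by a single translate $H+y$ up to a factor $C^m$, with the normalization $\|f\|_\infty/f_{H+y}$ appearing --- is exactly the content of the theorem and is not supplied; invoking ``Brunn's concavity plus Fradelizi-type localization'' is not a proof. For the lower bound the pointwise estimate gives only
\(\int_H\Delta_0 f\ge f_{H+y}\int_{H+y}f\),
whereas the statement requires
\(\int_H\Delta_0 f\ge c^m\,(\|f\|_\infty/f_{H+y})\int_{H+y}f\).
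These differ by a factor $f_{H+y}^2/\|f\|_\infty^2$, which can be arbitrarily small (e.g.\ $f(x)=e^{-|x|}$ and $|y|$ large), and the sup over $y$ need not be attained where $f_{H+y}$ is comparable to $\|f\|_\infty$. The ``Fradelizi-style comparison on the log-concave map $y\mapsto f_{H+y}$'' you gesture at to bridge this is again not an argument. The paper sidesteps both issues because the single identity $\vol_m(K_m(f|_{H+y}))=(1/f_{H+y})\int_{H+y}f$ makes the normalization appear automatically, and the inclusions of Lemmas~\ref{t:L2} and~\ref{t:L3} replace both the sup/integral interchange and the $f_{H+y}$ mismatch by fixed absolute constants. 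As written, your proposal does not establish either direction of \eqref{e:LC1}, and hence does not prove the corollary; simply citing Theorem~\ref{t:KMRlogconcave} and noting $\psi(n,m)\le n^{1/3}$ is what is expected here.
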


Before proceeding to the proof of Theorem~\ref{t:KMRlogconcave}, we must first introduce some concepts that are critical to the proof. 

To functions $f \in \text{LC}_0 $, for each $m \in \{1,\dots,n\}$, one may associate the following $n$-dimensional convex body originally due to Ball (see \cite{Ba} and \cite{Bob}):
$$
K_m(f) = \left\{x \in \R^n \colon \left(\frac{1}{\|f\|_{\infty}} \int_0^{\infty} mr^{m-1}f(rx) dr \right)^{-\frac{1}{m}} \leq 1 \right\}.
$$
The radial function of $K_m(f)$ is given by 
$$
\rho_{K_m(f)}(u) = \left(\frac{1}{\|f\|_{\infty}} \int_0^{\infty} mr^{m-1}f(ru) dr \right)^{\frac{1}{m}},
$$
where $u \in \s^{n-1}$. Moreover, for any $H \in G_{n,m}$, one has 
$$
\int_H f(x) dx = \|f\|_{\infty} \vol_m(K_m(f)\cap H). 
$$
Indeed, integrating in polar coordinates, we see that
\begin{align*}
\vol_m(K_m(f)\cap H) &= \int_{H} \chi_{K_m(f)}(x) dx \\
&= \int_{\s^{n-1} \cap H } \int_0^{\rho_{K_m(f)}(u)}r^{m-1} dr du\\
&= \frac{1}{m}\int_{\s^{n-1} \cap H } \rho_{K_m(f)}(u)^m du\\
&=\frac{1}{m}\int_{\s^{n-1} \cap H } \frac{m}{\|f\|_{\infty}} \int_0^{\infty} f(ru) r^{m-1} dr du\\
&= \frac{1}{\|f\|_{\infty}} \int_{H} f(z) dz.\\
\end{align*}

Additionally, we will use the following axuillary lemmas due to Klartag and Milman (see \cite[Lemma~2.2]{KM} and \cite[Lemma 2.7]{KM}, respectively).  We include their proofs in the Appendix for completeness. For more information on such sets see also \cite{ABG}

For $f \in \text{LC}_0 $ and $1 \leq m \leq n$, define the set
$$
L_m(f) =\{x \in \R^n \colon f(x) \geq \|f\|_{\infty} e^{-m} \}. 
$$

\begin{lemma}\label{t:L2}
	Given $f \in \mathcal{L}^n$ and $1 \leq m \leq n$, 
	$$
	K_m(f) \subset L_m(f) \subset c K_m(f)
	$$
	for some absolute constant $c>1$. 
\end{lemma}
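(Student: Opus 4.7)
The plan is to reduce both inclusions to a one-dimensional comparison along each ray from the origin, by exploiting the explicit radial formula for $K_m(f)$ and the radial nature of the level set $L_m(f)$.

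Fix a direction $u \in \s^{n-1}$ and consider the one-variable profile $g(r) := f(ru)/\|f\|_{\infty}$ for $r \geq 0$. Since $f$ is log-concave and attains its maximum at $0$, the function $g\colon [0,\infty) \to (0,1]$ is log-concave, non-increasing, and satisfies $g(0)=1$. Set $R := \rho_{L_m(f)}(u) = \sup\{r \geq 0 : g(r) \geq e^{-m}\}$, so that $g(R) = e^{-m}$. Concavity of $\log g$, applied to the chord joining $(0,0)$ and $(R,-m)$, yields the crucial two-sided comparison
\[
g(r) \geq e^{-mr/R} \text{ on } [0,R], \qquad g(r) \leq e^{-mr/R} \text{ on } [R,\infty).
\]

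The first step is to recast the radial function of $K_m(f)$ as a one-dimensional moment by scaling in the definition; one checks that $\rho_{K_m(f)}(u)^m = m\int_0^\infty r^{m-1} g(r)\, dr$. It will be convenient to pass to the inverse coordinate $s := -\log g(r)$: since $\log g$ is concave and non-increasing, the inverse function $\phi := s^{-1}$ is concave and non-decreasing on $[0,\infty)$ with $\phi(0)=0$ and $\phi(m)=R$, and an integration by parts delivers the clean representation
\[
\rho_{K_m(f)}(u)^m = \int_0^\infty e^{-s}\phi(s)^m\, ds.
\]

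For the inclusion $L_m(f) \subset c K_m(f)$ one uses the chord lower bound $\phi(s) \geq Rs/m$ on $[0,m]$ (above the chord of the concave $\phi$) to obtain
\[
\rho_{K_m(f)}(u)^m \geq \frac{R^m}{m^m}\int_0^m s^m e^{-s}\, ds = \frac{R^m}{m^m}\,\gamma(m+1,m),
\]
and Stirling's formula shows that $(m^{-m}\gamma(m+1,m))^{1/m}$ is bounded below by an absolute constant (of order $1/e$ for large $m$), giving the desired $R \leq c\,\rho_{K_m(f)}(u)$ uniformly in $u$. For the inclusion $K_m(f) \subset L_m(f)$ one combines the monotonicity bound $\phi(s) \leq R$ on $[0,m]$ with the extended-chord bound $\phi(s) \leq Rs/m$ on $[m,\infty)$ to get
\[
\rho_{K_m(f)}(u)^m \leq R^m(1-e^{-m}) + \frac{R^m}{m^m}\Gamma(m+1,m),
\]
where Stirling again controls the right-hand side. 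The sharpest form of this second estimate is obtained by decomposing $\phi(s) = Rs/m + \psi(s)$ with $\psi$ concave vanishing at $0$ and $m$ (hence non-negative on $[0,m]$ and non-positive beyond) and iterating integration by parts to show that the $\psi$-contribution to the integral is non-positive, which is what removes any extra multiplicative constant in this direction.

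The main technical obstacle is the small-$m$ regime: the Stirling-based estimates are very clean for $m$ large but must be tightened by the concavity/IBP refinement above in order to obtain absolute constants uniform in $1 \leq m \leq n$. Once that refinement is in place, both chains of inequalities hold with constants independent of $u$, and the star-shapedness of $K_m(f)$ and $L_m(f)$ with respect to the origin promotes the radial comparisons to the claimed set-theoretic inclusions.
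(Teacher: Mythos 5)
The paper does not prove this lemma; it cites it directly from Klartag and Milman, so I am evaluating your argument on its own terms.

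Your proof of the \emph{right} inclusion $L_m(f)\subset cK_m(f)$ is sound: the secant lower bound $\phi(s)\ge Rs/m$ on $[0,m]$ for the concave inverse $\phi$ of $-\log g$ gives $\rho_{K_m(f)}(u)^m\ge (R/m)^m\gamma(m+1,m)$, and Stirling yields an absolute lower bound for $\bigl(m^{-m}\gamma(m+1,m)\bigr)^{1/m}$; since $K_m(f)$ and $L_m(f)$ are star-shaped about $0$, the radial estimate lifts to the set inclusion.

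The \emph{left} inclusion $K_m(f)\subset L_m(f)$ has a genuine gap. Your secant bounds $\phi(s)\le R$ on $[0,m]$ and $\phi(s)\le Rs/m$ on $[m,\infty)$ give
\[
\rho_{K_m(f)}(u)^m\le R^m\bigl[(1-e^{-m})+m^{-m}\Gamma(m+1,m)\bigr],
\]
and the bracket exceeds $1$ for every $m$ (it equals $1+e^{-1}$ when $m=1$), so this only proves $K_m(f)\subset c_0L_m(f)$ for some absolute $c_0>1$, not the stated inclusion with constant $1$. The proposed refinement — decomposing $\phi=Rs/m+\psi$ and claiming by iterated integration by parts that the $\psi$-contribution to $\int_0^\infty e^{-s}\phi(s)^m\,ds$ is non-positive, hence $\int_0^\infty e^{-s}\phi(s)^m\,ds\le R^m m!/m^m$ — is false in general. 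Take $\phi$ rising linearly from $(0,0)$ to $(\epsilon,R)$ and constant $\equiv R$ thereafter (so $g$ is essentially the indicator of $[0,R]$ mollified near $0$): then $\int_0^\infty e^{-s}\phi(s)^m\,ds\to R^m>R^m m!/m^m$ as $\epsilon\to 0$ for every $m\ge 2$, so the ``$\psi$-contribution'' is strictly positive, while the correct inequality $\rho_{K_m}\le R$ merely attains equality. (And if $g\equiv 1$ near $0$ then $\phi$ jumps at $0$ and your hypothesis $\psi(0)=0$ already fails.)

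The standard proof of the left inclusion uses a \emph{supporting} exponential at the level $e^{-m}$ rather than the secant from the origin. Since $-\log g$ is concave, non-decreasing, with value $m$ at $R$, there is $c\ge m/R$ with $-\log g(r)\ge m+c(r-R)$ for all $r\ge 0$, whence $g(r)\le\min\{1,\,e^{cR-m}e^{-cr}\}$ with crossover at $r_0=R-m/c$. Then
\[
m\int_0^\infty r^{m-1}g(r)\,dr\ \le\ r_0^m+m\int_0^\infty(r_0+v)^{m-1}e^{-cv}\,dv
= r_0^m+m\sum_{k=0}^{m-1}\binom{m-1}{k}r_0^{m-1-k}\,\frac{k!}{c^{k+1}},
\]
and comparing term-by-term with the binomial expansion of $R^m=(r_0+m/c)^m$ reduces to $\tfrac{m!}{(m-1-k)!}\le\binom{m}{k+1}m^{k+1}$, i.e.\ $(k+1)!\le m^{k+1}$, which holds for $0\le k\le m-1$. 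This gives $\rho_{K_m(f)}(u)\le R$ exactly, with no extraneous constant. Your argument needs to be replaced by (or augmented with) this supporting-line step; the ``iterating integration by parts'' sketch cannot be made to work as stated.
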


\begin{lemma}\label{t:L3}
	Let $f \in \text{LC}_0$ and let $1 \leq m \leq n$. Then 
	\[
	K_m(\Delta_0f) \subset c [K_m(f) + (-K_m(f))] \subset c' K_m(\Delta_0 f)
	\]
	for some absolute constants $c,c' > 1$. 
\end{lemma}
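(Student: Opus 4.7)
The plan is to reduce everything to the explicit super-level sets $L_m$ using Lemma~\ref{t:L2}, then to prove a two-sided containment of $L_m(\Delta_0 f)$ between certain differences of level sets of $f$, and finally to bridge the gap between $L_m$ and $L_{m/2}$ via log-concavity. First I would record the simple facts that $\Delta_0 f$ is log-concave and satisfies $\|\Delta_0 f\|_\infty = \Delta_0 f(0) = f(0)^2 = \|f\|_\infty^2$ (so that Lemma~\ref{t:L2} applies to $\Delta_0 f$ as well).

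The core combinatorial step is the sandwich
\begin{equation*}
L_{m/2}(f) - L_{m/2}(f) \;\subset\; L_m(\Delta_0 f) \;\subset\; L_m(f) - L_m(f),
\end{equation*}
proved directly from the definition. For the left inclusion, if $x_1,x_2 \in L_{m/2}(f)$ then $f(x_1)f(x_2) \geq \|f\|_\infty^2 e^{-m} = \|\Delta_0 f\|_\infty e^{-m}$, so $x_1-x_2 \in L_m(\Delta_0 f)$. For the right inclusion, if $x \in L_m(\Delta_0 f)$ then by taking a near-maximizing pair (and then passing to a limit using upper semicontinuity of log-concave functions) one finds $x_1,x_2$ with $x=x_1-x_2$ and $f(x_1)f(x_2) \geq \|f\|_\infty^2 e^{-m}$; since each factor is bounded by $\|f\|_\infty$, we conclude each is at least $\|f\|_\infty e^{-m}$, i.e.\ $x_i \in L_m(f)$.

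The remaining ingredient is $L_m(f) \subset 2\, L_{m/2}(f)$, which follows from log-concavity applied to the midpoint: for $x \in L_m(f)$,
\begin{equation*}
f(x/2) \;\geq\; \sqrt{f(x) f(0)} \;\geq\; \|f\|_\infty\, e^{-m/2},
\end{equation*}
so $x/2 \in L_{m/2}(f)$. Combining these ingredients with Lemma~\ref{t:L2} (applied to both $f$ and $\Delta_0 f$) yields the two desired inclusions:
\begin{equation*}
K_m(\Delta_0 f) \;\subset\; L_m(\Delta_0 f) \;\subset\; L_m(f)-L_m(f) \;\subset\; c\bigl[K_m(f)+(-K_m(f))\bigr],
\end{equation*}
and, going the other way,
\begin{equation*}
K_m(f)+(-K_m(f)) \;\subset\; L_m(f)-L_m(f) \;\subset\; 2\,L_{m/2}(f)-2\,L_{m/2}(f) \;\subset\; 2\,L_m(\Delta_0 f) \;\subset\; c'\,K_m(\Delta_0 f),
\end{equation*}
each inclusion losing only an absolute constant.

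The only point that requires a little care is the right-hand side of the sandwich, where the supremum in the definition of $\Delta_0 f$ might not be attained in general; the fix is either an $\varepsilon$-approximation (which costs an absolute multiplicative factor and is absorbed into $c'$) or an appeal to upper semicontinuity of log-concave $f$ combined with the fact that $f \in \mathrm{LC}_0$ forces $f$ to decay, making the sup attained. This is the only mildly technical step—the rest is bookkeeping with the two lemmas and the midpoint log-concavity bound.
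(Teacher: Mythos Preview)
The paper does not supply its own proof of this lemma: it is quoted verbatim as an auxiliary result from Klartag--Milman \cite[Lemma~2.7]{KM}, so there is nothing in the present paper to compare your argument against directly.

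That said, your argument is correct and is essentially the standard route (and, in spirit, the Klartag--Milman one). The reduction to the level-set sandwich
\[
L_{m/2}(f)-L_{m/2}(f)\;\subset\;L_m(\Delta_0 f)\;\subset\;L_m(f)-L_m(f),
\]
together with the midpoint estimate $L_m(f)\subset 2\,L_{m/2}(f)$ from log-concavity, is exactly the mechanism that makes this work, and your two chains of inclusions are assembled correctly. Two minor remarks: (i) to invoke Lemma~\ref{t:L2} for $\Delta_0 f$ you should record that $\Delta_0 f\in\text{LC}_0$, which follows since $\Delta_0 f(0)=\|f\|_\infty^2=\|\Delta_0 f\|_\infty$ and $0<\int\Delta_0 f\le\binom{2n}{n}\int f<\infty$ by \eqref{e:Colesanti1}; (ii) the non-integer level $L_{m/2}$ is only used as an intermediate set and never fed into Lemma~\ref{t:L2}, so no issue arises there. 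Your handling of the possible non-attainment of the supremum in $\Delta_0 f$ via an $\varepsilon$-argument is the clean way to do it and costs nothing in the constants.
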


The power of the above lemmas is that they allow the replacement of the bodies $K_m(f)$ with certain level sets of a logarithmically concave function $f$. We now proceed to the proof of Theorem~\ref{t:KMRlogconcave}.

\begin{proof}[Proof of Theorem~\ref{t:KMRlogconcave}] Without loss of generality, we may assume that $\|f\|_{\infty} = 1$. For brevity we set $g = \Delta_0 f$ and consider its $m$-dimensional associated body $K_m(g)$. Since $\|\Delta_0f\|_{\infty} = \|f\|_{\infty}^2,$ we may also assume that $\|g\|_{\infty} = 1$. As mentioned above, we have that 
	\[\int_H g(x)dx = \vol_m(K_m(g) \cap H).
	\] Using Lemma~\ref{t:L3}, we must have that $K_m(g) \subset c(K_m(f)+ (-K_m(f)))$ for some constant $c>0$. Then, by applying (\ref{e:RU}), it follows that 
	\begin{equation}\label{e:lc1}
	\begin{split}
	\int_H g(x)dx &\leq  c^m \vol_m((K_m(f) +(-K_m(f))) \cap H)\\
	&\leq [c\psi(n,m)]^m \sup_{y \in \R^n} \vol_m(K_m(f) \cap (H+y)).
	\end{split}
	\end{equation}
	Fix an arbitrary $y \in \R^n$. We must compare $\vol_m(K_m(f) \cap (H+y))$ and $\int_{H+y} f(x)dx$. In view of Lemma~\ref{t:L2}, we observe 
	\begin{equation}\label{e:lc2}
	\begin{split}
	K_m(f) \cap (H+y) &\subset L_m(f) \cap (H+y)\\
	&= \{x \in H+y \colon f(x) \geq e^{-m}\}\\
	&\subset \{x \in H +y \colon f(x) \geq f_{H+y} e^{-m}\}\\
	&= L_m \left(f \mid_{H+y} \right)\\
	&\subset c' K_m \left( f \mid_{H+y} \right)
	\end{split}
	\end{equation}
	for some absolute constant $c'>0$. Let $\rho :=\rho_{K_m \left( f\mid_{H+y} \right)}$. Integrating in polar coordinates, 
	\begin{equation}\label{e:lc3}
	\begin{split}
	\vol_m\left(K_m \left( f\mid_{H+y} \right) \right) &= \int_{\R^n} \chi_{K_m \left( f\mid_{H+y} \right) }(x) dx \\
	&=  \int_{\partial B_y \cap (H+y) } \int_0^{\rho(u)}r^{m-1} dr du\\
	&=\frac{1}{m}\int_{\partial B_y \cap (H+y)} \rho(u)^m du\\
	&=\int_{\partial B_y \cap (H+y)} \frac{1}{f_{H+y}} \int_0^{\infty} f(rz) r^{m-1} dr du\\
	&= \frac{1}{f_{H+y}} \int_{H+y} f(x) dx,\\
	\end{split}
	\end{equation}
	where $B_y= B_n+y$ and $f_{H+y} = \sup_{x \in H+y} f(x)$. 
	Combining (\ref{e:lc1}), (\ref{e:lc2}), and (\ref{e:lc3}), we obtain
	\[
	\int_{H} g(x) dx \leq [c\psi(n,m)]^m \sup_{y \in \R^n} \left\{ \frac{1}{f_{H+y}} \int_{H+y} f(x) dx \right\}.
	\]
	Finally, by taking the $m$th root of both sides, we have proven the upper bound of (\ref{e:LC1}).  
	
	Now we prove the second inequality of the theorem. In view of Lemma~\ref{t:L2}, we may apply inclusions similar to \eqref{e:lc2} to conclude that 
	$$
	K_m\left(f|_{H}\right) \subset c_1 K_m(f) \cap H
	$$ 
	for some absolute constant $c_1>0$.  Using Lemma~\ref{t:L3} together with the Brunn-Minkowski inequality, we see that
	\begin{equation}\label{e:sure2}
	\begin{split}
	\vol_m\left(K_m\left(f |_{H}\right)\right) &\leq c_1^m \vol_m((K_m(f)\cap H)\\
	&\leq c_1^m 2^{-m} \vol_m((K_m(f) + (-K_m(f)) \cap H)\\
	&\leq (c')^m 2^{-m} \vol_m(K_m(g)\cap H)\\
	&= (c')^m 2^{-m} \int_H g(x) dx.
	\end{split}
	\end{equation}
	Combining \eqref{e:lc3} and \eqref{e:sure2}, we see that, for some constant $\tilde{c}>0$, 
	\begin{align*}
	(\tilde{c})^m\frac{1}{\|f\|_{\infty}} \int_{H} f(x) dx &\leq \vol_m(K_m(g)\cap H)\\
	&= \int_H g(x) dx.
	\end{align*}
Taking the $m$th root yields the lower bound of inequality (\ref{e:LC1}), completing the proof. 
\end{proof}

\section{Appendix}

Here we proof the auxillary results from \cite{KM} that were used to establish Theorem~\ref{t:KMRlogconcave}. The proofs seen below follow the ideas of \cite{KM}.
We again consider the following class of admissible functions,
$$
\mathcal{L}^n = \left\{f \colon \R^n \to \R_+ \colon f \text{ is } \log\text{-concave, } f(0) = ||f||_{\infty}, 0 < \int f<\infty \right\}.
$$
To functions $f \in \mathcal{L}^n$, for each $m \in \{1,\dots,n\}$, one my associate the following $m$-dimensional convex body
$$
K_m(f) = \left\{x \in \R^n \colon \left(\frac{1}{||f||_{\infty}} \int_0^{\infty} mr^{m-1}f(rx) dr \right)^{-1/m} \leq 1 \right\}
$$
whose radial function is given by 
$$
\rho_{K_m(f)}(u) = \left(\frac{1}{||f||_{\infty}} \int_0^{\infty} mr^{m-1}f(ru) dr, \right)^{1/m}
$$
where $u \in \s^{n-1}$. Moreover, one see that, for any $m$-dimensional linear subspace $H$ of $\R^n$, one has 
$$
\int_H f(x) dx = ||f||_{\infty} \cdot \vol_m(K_m(f)\cap H). 
$$
Given $f,g \in \mathcal{L}^n$, we define the following  logarithmically concave function $f \star g$ by 
$$
f\star g(x) := \sup_{x=x_2+x_2}f(x_1)g(x_2). 
$$
By selecting $g(x) = f(-x)$, we define the difference function of $f$ by 
$$
\Delta_0 f(x) = \sup_{x=x_1-x_2} f(x_1)f(x_2),
$$
which is an analogue of the difference body in the setting of $log$-concave functions. For $f \in \mathcal{L}^n$ and $1 \leq m \leq n$, define the set
$$
L_m(f) =\{x \in \R^n \colon f(x) \geq ||f||_{\infty} \cdot e^{-m} \}. 
$$

\begin{lemma}\label{t:KM1}
Let $g \colon [0,\infty) \to [0,\infty]$ be a non-decreasing convex function that is not identically zero and fixes the origin. For $m \geq 1$, define the quantity $M = \sup_{t > 0} e^{-g(t)} t^m$, and let $t_0$ be its corresponding unique critical point. Then
$$
\frac{Mt_0}{m+1}\leq \int_0^{\infty} e^{-g(t)}t^m dt < c \frac{Mt_0}{\sqrt{m}},
$$
where $c >0$ is some universal constant.  Moreover, $g(2t_0) \geq m \geq g(t_0)$. 
\end{lemma}

\begin{proof} To handle the left inequality, we note that since $g$ is non-decreasing, it must be the case that 
$$
\int_0^{\infty} e^{-g(t)} t^m dt \geq e^{-\liminf_{t\to t_0^-}g(t)} \int_0^{t_0}t^m dt = \frac{Mt_0}{m+1}.
$$
For right-most inequality, we must consider the function $\varphi(t) = g(t) -m\log t$ whose unique critical point is $t_0$. Differentiation of $\varphi$ yields $\varphi_L'(t_0) \leq 0 \leq \varphi_R'(t_0)$. As a consequence, we conclude both that $g_L'(t_0) \leq \frac{m}{t_0} \leq g_R'(t_0)$ and that $g(t_0) + \frac{m}{t_0}(t-t_0)$ is a supporting line of $g$ at $t_0$. The convexity of $g$ implies that, for every $t>0$, $g(t) \geq g(t_0) + \frac{m}{t_0}(t-t_0)$. Therefore,
\begin{align*}
\int_0^{\infty} e^{-g(t)}t^mdt &\leq e^{m-g(t_0)} \int_0^{\infty} e^{-\frac{m}{t_0}}t^m dt\\
&= e^{-m-g(t_0)} \left(\frac{t_0}{m} \right)^{m+1} \int_0^{\infty} e^{-t}t^m dt\\
=e^{-g(t_0)} t_0^m \frac{e^mm!}{m^m}\frac{t_0}{m} \approx M \frac{t_0}{\sqrt{m}}.
\end{align*}
\indent For any $t< t_0$, we have that $g_R'(t) \leq \frac{m}{t_0}$, and hence $g(t_0) \leq g(0) + \int_0^{t_0} m/t_0 = m$.  Finally, we observe that $g(2t_0) \geq g(t_0) + \frac{m}{t_0}(2t_0-t_0) \geq m$, completing the proof. 
\end{proof}

\begin{lemma}\label{t:KM2}
Given $f \in \mathcal{L}^n$ and $1 \leq m \leq n$, 
$$
K_m(f) \subset L_m(f) \subset c \cdot K_m(f)
$$
for some universal constant $c>0$. 
\end{lemma}

\begin{proof} Fix an arbitrary direction $u \in \s^{n-1}$. We compare $\rho_{K_m(f)}(u)$ and $\rho_{L_m(f)}(u)$. Set $g(ru) := -\log(\bar{f}(r))$, with $\bar{f}= f/||f||_{\infty}$, $M = \sup_{r>0} e^{-g(r)}r^{m-1}$, and let $r_0$ be its corresponding unique critical point. Using Lemma~\ref{t:KM1}, we obtain the estimates 
$$
M \cdot \frac{r_0}{m} \leq \int_0^{\infty} e^{-g(r)}r^{m-1}dr \leq c \cdot \frac{Mr_0}{\sqrt{m-1}},
$$
or equivalently, that 
$$
\rho_{K_m(f)}(u) \approx \left(M \cdot r_0 \right)^{1/m}.
$$
Note that $(M\cdot r_m)^{1/m} \left(r_m^m\right)^{1/m} = r_0$, which implies that $\rho_{K_m(f)}(u) \leq r_0$.  Let $g^{-1}(r) = f^{-1}(||f||_{\infty}e^{-r})$ denote the inverse of $g$. Applying Lemma~\ref{t:KM1}, we see that $g(2r_0) \geq m \geq g(r_0)$, and by applying $g^{-1}$, we see that $r_0 \leq f^{-1}(||f||_{\infty} e^{-m}) \leq 2r_0$, or equivalently, that $f(r_0) \leq ||f||_{\infty}e^{-m} \leq f(2r_0)$, so that $\phi_{L_m(f)}(u) \in [r_0,2r_0]$. This implies precisely the desired inclusions above. 

\end{proof}

\begin{lemma}\label{t:KM3}
Let $f,g \in \mathcal{L}^n$, $1 \leq m \leq n$, and $\theta \in [0,1]$. Then 
 $$
K_m(f \star g) \subset c_0 \cdot (K_m(f) + K_m(g)),
$$
for some universal constant $c_0$. 
\end{lemma}

\begin{proof}[\bf Proof.] Let $\bar{f}= f/||f||_{\infty}$ and $\bar{g}=g/||g||_{\infty}$. Let $x \in L_m(\bar{f} \star \bar{g})$. In view of Lemma~\ref{t:KM2}, we have that $x \in L_m(f \star g)$. Therefore there exist $x_1,x_2 \in \R^n$ such that $x=x_1+x_2$ and $\bar{f}(x_1)\bar{f}(x_2) \leq e^{-m}$. Since $\bar{f}$ and $\bar{g}$ do not exceed one, it follows that $\bar{f}(x_1) \geq e^{-m}$ and $\bar{g}(x_2) \geq e^{-m}$, which implies that $f(x_1) \geq ||f||_{\infty} e^{-m}$ and $g(x_2) \geq ||g||_{\infty} e^{-m}$, and hence $x_1 \in L_m(f)$ and $x_2 \in L_m(g)$, and upon applying Lemma~\ref{t:KM2}, desired inclusion follows.
\end{proof}

\section*{Acknowledgements} The author would like to thanks Artem Zvavitch, Andrea Colesanti, and Sergii Myroshnychenko for many helpful comments that helped to greatly improve the quality of the manuscript. The author would also like to thank the anonymous referee for many help comments and remarks that helped to improve the manuscript.  Finally, the author would to thank the Universidad de Murica and University of Florence for their hospitality in the summer of 2019 where part of this research was established. 

\providecommand{\bysame}{\leavevmode\hbox to3em{\hrulefill}\thinspace}
\providecommand{\MR}{\relax\ifhmode\unskip\space\fi MR }
\providecommand{\MRhref}[2]{%
	\href{http://www.ams.org/mathscinet-getitem?mr=#1}{#2}
}
\providecommand{\href}[2]{#2}

\end{document}